\newcommand{\R}{\mathbb{R}}
\newcommand{\T}{\mathcal{T}}
\newcommand{\gmin}{\gamma_{\operatorname{min}}}
\newcommand{\gmax}{\gamma_{\operatorname{max}}}
\newcommand{\dmin}{d_{\mathcal{U}}^{\mbox{\rm \tiny min}}}
\newcommand{\MsFEM}{\mbox{\tiny{MsFEM}}}
\newcommand{\Qh}{Q_h}
\newcommand{\Vf}{W_{h}}
\newcommand{\oVf}{\mathring{W}_{h}}
\newcommand{\Ic}{I_H}
\newcommand{\Icloc}[1]{I_H^{#1}}
\newtheorem{theorem}{Theorem}[section]
\newtheorem{corollary}[theorem]{Corollary}
\newtheorem{lemma}[theorem]{Lemma}
\theoremstyle{definition}
\newtheorem{definition}[theorem]{Definition}
\newtheorem{remark}[theorem]{Remark}
\newtheorem*{modelproblem}{Model Problem} %
\newtheorem{osstrategy}{Oversampling Strategy} %
\title{Oversampling for the Multiscale Finite Element Method}
\author{Patrick Henning \thanks{Institut f\"ur Numerische und Angewandte Mathematik, Westf\"alische Wilhelms-Universit\"at M\"unster, Einsteinstr. 62, D-48149 M\"unster, Germany}
 \and Daniel Peterseim \thanks{Institut f\"ur Mathematik, Humboldt-Universit\"at zu Berlin, Unter den Linden 6, D-10099 Berlin, Germany}
 \hspace{1ex}\thanks{The second author is supported by the DFG Research Center Matheon Berlin through project C33.}}
\begin{document}

\maketitle

\begin{abstract}
This paper reviews standard oversampling strategies as performed in the Multiscale Finite Element Method (MsFEM). Common to those approaches is that the oversampling is performed in the full space restricted to a patch but including coarse finite element functions. We suggest, by contrast, to perform local computations with the additional constraint that trial and test functions are linear independent from coarse finite element functions. This approach re-interprets
the Variational Multiscale Method in the context of computational homogenization.   
This connection gives rise to a general fully discrete error analysis for the proposed multiscale method with constrained oversampling without any resonance effects. In particular, we are able to give the first rigorous proof of convergence for a MsFEM with oversampling.
\end{abstract}

\paragraph*{Keywords}
a priori error estimate, finite element method, multiscale method, MsFEM, oversampling

\paragraph*{AMS subject classifications}
35J15, 65N12, 65N30

\section{Introduction}
\label{section-introduction}

The numerical treatment of partial differential equations with rapidly varying and strongly heterogeneous coefficient functions is still a challenging area of present research, especially with regard to applications such as porous media flow or the transport of solutes in groundwater. In such problems, the occurring permeabilities and hydraulic conductivities have rapidly changing features due to different types of soil, microscopic inclusions in the bottom or porous subsurface rock formations. Any meaningful numerical simulation of relevant physical effects has to account for these highly heterogeneous fine scale structures in the whole computational domain. This means that the underlying computational mesh has to be sufficiently fine to resolve microscopic details. If pore scale effects become relevant or if domains spread over kilometers, therefore, the computational load becomes extremely large and in several applications even too large to treat the problem with standard finite element or finite volume 
methods. This is just one instance of a so called multiscale problem as it arises in hydrology, physics or industrial engineering.

In recent years, many numerical methods have been designed to deal with these computational issues that come along with multiscale problems. Most of them aim to decouple the global fine scale problem into localized subproblems which can be treated independently from each other plus some global coarse problem. The list of meanwhile proposed multiscale methods is long. Amongst the most popular methods are the heterogeneous multiscale finite element method (HMM), initially introduced by E and Engquist \cite{E:Engquist:2003} (see also \cite{E:Engquist:2003-2,E:Engquist:2005,Abdulle:2009}, the Variational Multiscale Method by Hughes et al. \cite{Hughes:1995,Hughes:et-al:1998} (see also \cite{Larson:Malqvist:2007,Malqvist:2011,2011arXiv1110.0692M}) or the approaches by Owhadi and Zhang \cite{Owhadi:Zhang:2007,Owhadi:Zhang:2011}.

In this paper, we deal with another popular method: the Multiscale Finite Element Method (MsFEM) proposed by Hou and Wu \cite{Hou:Wu:1997} and further investigated in several contributions \cite{Hou:Wu:Cai:1999,Efendiev:Hou:Wu:2000,Efendiev:Pankov:2003,Hou:Wu:Zhang:2004}. There is an ongoing development of the method to apply it to various fields and equations. For instance, a MsFEM for nonlinear elliptic problems is proposed in \cite{Efendiev:Hou:Ginting:2004}, a formulation for two phase flow problems in porous media is presented in \cite{Efendiev:Hou:2007}, advection diffusion problems are treated in \cite{Deng:Yun:Xie:2009} and an application to elliptic interface problems with high contrast coefficients is presented in \cite{Chu:Graham:Hou:2010}. A survey on the method is given in the book by Efendiev and Hou \cite{Efendiev:Hou:2009}. There is vast literature of works devoted to the method but there are still open questions of strong interest. The most relevant issue is a rigorous error 
analysis of the method, in particular in the case of non-periodic microstructures. 

The MsFEM is related to some common finite element space with an underlying coarse grid. The essential idea is to modify the corresponding basis functions in such a way that fine scale variations on finer scales are sufficiently well captured. More specifically, local fine scale computations are performed to determine so called {\it corrector functions}. These corrector functions can be added as local perturbations to the original set of basis functions of the coarse finite element space.

However, it is well known that the classical MsFEM suffers from so called resonance errors,
which are typically of order $O(\frac{\varepsilon}{H})$, where $\varepsilon$ denotes a characteristic size of the small scale and where $H$ denotes the mesh size of the coarse grid (c.f. \cite{Hou:Wu:Cai:1999,Hou:Wu:Zhang:2004}).
This implies that the numerical error becomes large in regions where the coarse grid size is close to the characteristic length scale of the microscopic oscillations.
There are two different explanations for this error. The first one is a mismatch between the boundary conditions imposed for the local fine scale problems
and global behavior of the oscillatory exact solution (c.f \cite{Efendiev:Hou:Wu:2000}). The second explanation is due to size and geometry of the sampling patch (c.f. \cite{Hou:Wu:Zhang:2004}). The averaged behavior in such a patch should be 'representative' so that we can speak about a perfect sample size. If this is not the case, the final approximation might be distorted. In the periodic setting, for instance, the sampling domain should be some multiple of the periodic cell. On triangular patches with cathetuses of the length of a period, this patch is only half a periodic cell (i.e. the patch has bad size and geometry) and lacks essential information. This yields a completely wrong approximation (c.f. \cite{Henning:2012}). In the periodic setting considerable improvements were obtained by Gloria \cite{Gloria:2011,Gloria:2012}, who proposed a regularization of the local (patch) problems by adding a zero-order term. With this strategy, both sources of the oversampling error could be significantly reduced (
c.f. \cite[Theorem 3.1]{Gloria:2011} and \cite[Sections 5.3 and 5.4]{Gloria:2012}.

In a lot of applications, such as oil reservoir simulations or the transport of solutes in groundwater, a characteristic microscopic  length scale $\varepsilon$ is unknown, cannot be identified or does not exist at all. In scenarios without  a clear scale separation it is often impossible to predict whether or not we are in the regime of resonance errors. It is very likely to actually hit the problematic regime.
Hence, the quality of the final approximation can not be determined unless resonance errors are eliminated. For this purpose, different {\it oversampling strategies} have been proposed.
The fundamental idea of each of these techniques is to extend the local problems to larger patches, perform the computation on these {\it oversampling domains} but feed the coarse scale
equation only with the information obtained within the original smaller patches. This reduces the effect of wrong boundary conditions and bad sampling sizes. In this paper, we
present the two major strategies for oversampling and discuss their advantages and disadvantages. On the basis of these considerations we propose a new strategy that overcomes the issues of the existing strategies. The new approach is closely related to the VMM-type method presented in \cite{2011arXiv1110.0692M}.  We prove quantitative error estimates for the corresponding multiscale approximations under very general assumptions on the diffusion coefficient.

$\\$
This contribution is structured as follows: In Section \ref{section-msfem-general} we recall the classical formulation of the MsFEM without oversampling. The most popular approaches for oversampling are discussed in Section \ref{section-oversampling-strategies}. In Section \ref{section-new-oversampling-strategy} we propose a new strategy for which we present a quantitative error analysis. Numerical experiments are presented in Section \ref{section-numerical-experiments}. The paper closes with a short conclusion.

\section{The Multiscale Finite Element Method}
\label{section-msfem-general}

In this section, we state the setting of this paper and we establish the required notations. We recall the classical Multiscale Finite Element Method (MsFEM) as initially proposed by Hou and Wu \cite{Hou:Wu:1997}.

\subsection{Setting and notation}
\label{subsection-setting}
Consider a bounded Lipschitz domain $ \Omega\subset\mathbb{R}^{d}$ with piecewise flat boundary and some matrix-valued coefficient
$A\in L^\infty(\Omega,\mathbb{R}^{d\times d}_{sym})$ with uniform spectral bounds $\gmin>0$ and $\gmax\geq\gmin$, 
\begin{equation}\label{e:spectralbound}
\sigma(A(x))\subset [\gmin,\gmax]\quad\text{for almost all }x\in \Omega.
\end{equation}
Given $f\in L^{2}( \Omega) $, we seek the weak solution of 
\begin{equation*}\label{eq:model}
  \begin{aligned}
    -\nabla \cdot A\nabla u &= f\quad \text{in }\Omega, \\
    u & = 0 \quad \text{on }\partial\Omega, \\
  \end{aligned}
\end{equation*}
i.e., we seek $u\in H^1_0(\Omega):=\{v\in H^1(\Omega)\;\vert\;v\vert_{\partial\Omega}=0\text{ in the sense of traces}\}$ that satisfies
\begin{equation}\label{e:modelproblem}
a\left(  u,v\right):=\int_{\Omega}A\nabla u\cdot \nabla
v =\int_{\Omega}fv=:F( v) \quad\text{for all } v\in H^1_0(\Omega).
\end{equation}

We consider two discretization scales $H\geq h>0$. The coarse scale $H$ is arbitrary whereas the small
scale parameter $h$ may be constrained by the problem. Typically, it is assumed to be smaller than the characteristic length scales of the
variations of the diffusion coefficient $A$. 

Let $\T_H$, $\T_h$ denote corresponding subdivisions of $\Omega$ into (closed) triangles (for $d=2$) and tetrahedra (for $d=3$),
i.e., $\bar{\Omega}=\bigcup_{t\in\T_h}t=\bigcup{T\in\T_H}T$. We assume that $\T_H$, $\T_h$ are regular
in the sense that any two elements are either disjoint or share exactly one face or share exactly one edge or share exactly one vertex.
For simplicity we assume that $\T_h$ is derived from $\T_H$ by some regular, possibly non-uniform, mesh refinement.

For $\T=\T_H,\T_h$, let
\begin{equation*}
P_1(\T) = \{v \in C^0(\Omega) \;\vert \;\forall T\in\T,v\vert_T \text{ is a polynomial of total degree}\leq 1\}
\end{equation*}
denote the set of continuous and piecewise affine functions. 

Accordingly, $V_h:=P_1(\T_h)\cap H^1_0(\Omega)$ denotes the 'high resolution' finite element space and the 'coarse space'
is given by $V_H:=P_1(\T_H)\cap H^1_0(\Omega) \subset V_h$. For any given subset $\omega \subset \Omega$ we define the restriction
of $V_h$ to $\omega$ with a zero boundary condition by $\mathring{V}_h(\omega):=V_h \cap H^1_0(\omega)$. The nonconforming fine space $V_{h,\T_H}$ is defined by
\begin{align*}
 V_{h,\T_H} := \{ v_h\;|\; \hspace{2pt} \forall T \in \T_H, \hspace{2pt} (v_h)\vert_{T}\in V_h \cap H^1(T)\}.
\end{align*}
A general function in $v\in V_{h,\T_H}$ may jump across edges of the coarse mesh $\T_H$ and, hence, does not belong to $H^1_0(\Omega)$. However, the $\T_H$-piecewise gradient $\nabla_H v$, with $(\nabla_H v)\vert_T=\nabla (v\vert_T)$ for all $T\in\T_H$, exists. Typically, MsFEM approximations obtained with oversampling are nonconforming approximations of the exact solution in the sense that they do not belong to $H^1_0(\Omega)$.

In the following $x_T \in T$ denotes an arbitrary point, for instance the barycenter of $T$. For $\Phi_H \in V_H$ and $T \in \T_H$, the affine extension operator $E_T : V_H \rightarrow P_1(\Omega)$ is given by:
\begin{align*}
E_T(\Phi_H)(x):=(x-x_T)\cdot \nabla \Phi_H(x_T) + \Phi_H(x_T).
\end{align*}
Finally, by $\chi_T$ we denote the characteristic (or indicator) function with $\chi_T(x)=1$ for $x\in T$ and $\chi_T(x)=0$ elsewhere.

For the sake of simplicity, all fine scale computations are performed in subspaces of the finescale finite element space $V_h$. 
The Galerkin solution $u_h\in V_h$ which satisfies
\begin{equation}\label{e:modelref}
a_h(u_h,v) = F(v) \quad\text{ for all }v\in V_h
\end{equation}
may, hence, be considered as a reference approximation. 
Note that we never solve this large-scale equation. The function $u_h$ serves as a reference solution to compare our multiscale approximations with. The underlying assumption is that the mesh $\T_h$ is chosen sufficiently fine so that $u_h$ is sufficiently accurate. 

Throughout this paper, standard notation on Lebesgue and Sobolev spaces is employed and $a\lesssim b$ abbreviates an inequality $a\leq C\,b$
with some generic constant $0\leq C<\infty$ that may depend on the shape regularity of finite element meshes and the contrast $\gmax/\gmin$ but \emph{not} on the mesh sizes $H$, $h$, and the regularity or the variations of the diffusion matrix $A$; $a\approx b$ abbreviates $a\lesssim b\lesssim a$. 

\subsection{The classical MsFEM and reformulation}

We first present the classical MsFEM without oversampling as originally stated by Hou and Wu \cite{Hou:Wu:1997}, and similarly by Brezzi et al. \cite{Brezzi:Franca:Hughes:Russo:1997}.
They proposed the strategy to enrich the set of standard finite element basis functions by fine scale information.
The information is determined by solving local problems on the fine scale. We recall briefly the method and reformulate it in terms of a correction operator $Q_h$ and a corresponding corrector basis.

$\\$
Let $N$ denote the dimension of the coarse space $V_H$ and let $\{ \Phi_i\;|\; \enspace 1 \le i \le N\}$ denote the usual nodal basis of $V_H$.
Given some basis function $\Phi_i$, the corresponding MsFEM basis function $\Phi_i^{\MsFEM} \in V_h$ is uniquely determined by the condition that for all $T \in \T_H$ and for all $\phi_h \in \mathring{V}_h(T)$ it holds
\begin{align}
\label{multiscale-basis-functions} \int_{T} A(x) \nabla \Phi_i^{\MsFEM}(x) \cdot \nabla \phi_h(x)\hspace{2pt} dx = 0\quad\text{and }\Phi_i^{\MsFEM}=\Phi_i \text{ on }\partial T.
\end{align}
The span of these MsFEM functions is called the MsFEM solution space
\begin{align*}
 V_H^{\MsFEM} := \mbox{\rm span}\{ \Phi_i^{\MsFEM}\;|\; \enspace 1 \le i \le N\}.
\end{align*}
This space is conforming in the sense of $V_H^{\MsFEM} \subset V_h \subset H^1_0(\Omega)$, because $\{ \Phi_i^{\MsFEM}\;|\; \enspace 1 \le i \le N\}$ defines
a conforming set of basis functions.
The classical MsFEM in Petrov-Galerkin (PG) formulation due to \cite{Hou:Wu:Zhang:2004} reads as follows.
\begin{definition}[MsFEM without oversampling]
\label{definition-MsFEM-without-oversampling}The MsFEM approximation $u_H^{\MsFEM}\in V_H^{\MsFEM}$ is defined as the solution of
\begin{align}\label{e:msfem1}
 \int_{\Omega} A(x) \nabla u_H^{\MsFEM}(x) \cdot \nabla \Phi_H(x) \hspace{2pt} dx = \int_{\Omega} f(x) \Phi_H(x) \hspace{2pt} dx \qquad \mbox{for all} \enspace \Phi_H \in V_H.
\end{align}
\end{definition}
In \cite{Hou:Wu:1997}, the MsFEM was originally proposed in Galerkin formulation, i.e. the test functions $\Phi_H \in V_H$ in \eqref{e:msfem1} are replaced by test functions $\Phi_H^{\MsFEM}\in V_H^{\MsFEM}$. Observe that due to the orthogonality property (\ref{multiscale-basis-functions}) both formulations are almost identical (in the absence of oversampling). For structural reasons we used the PG version to introduce the MsFEM.

With regard to the general framework for oversampling that we present in the subsequent sections, we note that the MsFEM can be rewritten in the following way.
\begin{remark}
\label{reformulation-classical-msfem}
If $u_H^{\MsFEM}\in V_H^{\MsFEM}$ denotes the MsFEM approximation stated in Definition \ref{definition-MsFEM-without-oversampling}, then we have $u_H^{\MsFEM}=u_H+Q_h(u_H)$, where $u_H \in V_H$ solves
\begin{subequations}
 \label{e:msfem2}
\end{subequations}
\begin{align}
\label{def-local-probl-no-oversampling} \int_{\Omega} A \left( \nabla u_H + \nabla Q_h(u_H) \right) \cdot \nabla \Phi_H = \int_{\Omega} f \Phi_H\quad\text{for all }\Phi_H \in V_H,
\tag{\ref{e:msfem2}.a}
\end{align}
with
\begin{align}
Q_h(\Phi_H)(x):= \sum_{T \in \T_H} \sum_{i=1}^d \partial_{x_i}\Phi_H(x_T) w_{T,i}(x),
\tag{\ref{e:msfem2}.b}
\end{align}
and $w_{T,i} \in \mathring{V}_h(T)$ being the unique solution of
\begin{align}
\int_{T} A \nabla w_{T,i} \cdot \nabla \phi_h = - \int_{T} A e_i \cdot \nabla \phi_h
\quad \mbox{for all } \phi_h \in \mathring{V}_h(T).
\tag{\ref{e:msfem2}.c}
\end{align}
The set of all functions $w_{T,i}$ is what we are going to call a {\it local corrector basis}. From the computational point of view, it seems on first glance to be cheaper to compute the corrector basis given by ({\ref{e:msfem2}.c}) instead of directly computing the set of multiscale basis functions given by (\ref{multiscale-basis-functions}). The latter one formally involves more problems to solve. For instance if $d=2$, the assembling of the corrector basis $\{ w_{T,i}\;\vert\;T \in \T_H, i=1,2\}$ requires the solution of $2\cdot |\T_H|$ local problems, whereas the solutions of $3\cdot |\T_H|$ local problems are required to assemble $\{ \Phi_i^{\MsFEM}\;|\; \enspace 1 \le i \le N\}$ by using the gradients of coarse basis functions (for which we have 3 per coarse element). Still, it is possible to use the partition of unity property of the basis functions to equally decrease the costs of the original version of the MsFEM from $d \cdot |\T_H|$ to $(d-1)\cdot |\T_H|$. In particular, restricted to $T$, the gradients of $(d-1)$ basis functions 
associated with $(d-1)$ corners of the element $T$ span the gradient of the missing $d$'th basis function on $T$.
\end{remark}
The equivalence between the formulations \eqref{e:msfem1} and \eqref{e:msfem2} can be easily verified by the relation $\Phi_i^{\MsFEM} = \Phi_i + Q_h(\Phi_i)$. Observe that for every $i$, for every $T \in \T_H$ and every $\phi_h \in \mathring{V}_h(T)$,
\begin{eqnarray*}
\lefteqn{\int_{T} A(x) \left( \nabla \Phi_i(x) + \nabla Q_h(\Phi_i)(x) \right) \cdot \nabla \phi(x) \hspace{2pt} dx} \\
&=& \sum_{i=1}^d \partial_{x_i}\Phi_H(x_T) \int_{T} A(x) \left( e_i + \nabla w_T^i(x) \right) \cdot \nabla \phi(x) \hspace{2pt} dx = 0
\end{eqnarray*}
and $\Phi_i + Q_h(\Phi_i) = \Phi_i$ on $\partial T$, which is the definition of $\Phi_i^{\MsFEM}$. 

A symmetric formulation of \eqref{def-local-probl-no-oversampling} is given by: find $u_H \in V_H$ with
\begin{align}
\label{def-local-probl-no-oversampling-symmetric} \int_{\Omega} A \left( \nabla u_H + \nabla Q_h(u_H) \right) \cdot \left( \nabla \Phi_H + \nabla Q_h(\Phi_H) \right)= \int_{\Omega} f \Phi_H\quad\text{for all }\Phi_H \in V_H.
\end{align}
Note that \eqref{def-local-probl-no-oversampling} and \eqref{def-local-probl-no-oversampling-symmetric} are identical, because
\begin{align*}
\int_{T} A \left( \nabla u_H + \nabla Q_h(u_H) \right) \cdot \nabla \phi_h = 0 \quad \mbox{for all} \enspace \phi_h \in \mathring{V}_h(T).
\end{align*}

\section{Oversampling strategies}
\label{section-oversampling-strategies}

As already discussed in the introduction, the classical MsFEM in Definition \ref{definition-MsFEM-without-oversampling} can be strongly affected or even dominated by resonance errors (c.f. \cite{Efendiev:Hou:2009}). In the absence of scale separation or any knowledge about a suitable sample size for the local problems, the classical MsFEM needs a modification.
{\it Oversampling} is considered to be a remedy to this issue. Oversampling means that the local problems (\ref{def-local-probl-no-oversampling}) are solved on larger domains, but only the interior information (i.e. we restrict the gained fine scale information to $T$) is communicated to the coarse scale equation (\ref{def-local-probl-no-oversampling}). 

There is no unique way of extending the local problems (\ref{def-local-probl-no-oversampling}) to larger patches. Different extensions lead to different oversampling strategies. In this section, we present the two common approaches for oversampling. We rephrase both approaches so that they fit into a common framework. We discuss the advantages and disadvantages of the methods and then we propose our new oversampling strategy. Note that each of the subsequent strategies is a generalization of the case without oversampling.

$\\$
We shall introduce some additional notation. 
\begin{definition}[Admissible patch]
For $T \in \T_H$, we call $U(T)$ an {\it admissible patch} of
$T$, if it is non-empty, open, and connected, if $T \subset U(T) \subset \Omega$ and if
it is the union of elements of $\T_h$, i.e.
\begin{align*}
    U(T) = \operatorname{int}\bigcup_{\tau \in \T_h^{\ast}} \tau, \quad \mbox{where} \enspace
    \T_h^{\ast} \subset \T_h.
\end{align*}
\end{definition}

A given set of admissible patches is given by $\mathcal{U}$, i.e.
\begin{align*}
 \mathcal{U} := \{ U(T)\;|\; \hspace{2pt} T \in \T_H \enspace \mbox{and } U(T) \enspace \mbox{is an admissible patch}\},
\end{align*}
where $\mathcal{U}$ contains one and only one patch $U(T)$ for each $T \in \T_H$. The set $U(T) \setminus T$ is called an {\it oversampling layer}. The thickness of the oversampling layer is denoted by $d_{\mathcal{U},T}:=\mbox{dist}(T,\partial U(T))$. Furthermore, we define 
\begin{align*}
d_{\mathcal{U}}^{\mbox{\rm \tiny min}} := \min_{T \in \T_H} d_{\mathcal{U},T} \quad \mbox{and} \quad d_{\mathcal{U}}^{\mbox{\rm \tiny max}} := \max_{T \in \T_H} d_{\mathcal{U},T}
\end{align*}
the minimum and maximum thickness.

$\\$
In the spirit of \eqref{def-local-probl-no-oversampling} and \eqref{def-local-probl-no-oversampling-symmetric}, we now define the coarse scale equation for an arbitrary Multiscale Finite Element Method with a chosen oversampling strategy. As we will see later on, all MsFEM realizations only differ in the correction operator $Q_h$ that determines the oversampling strategy.
\begin{definition}[Framework for Oversampling Strategies]\label{d:framework}
\label{oversampling-framework}
Let $\alpha=1,2,3$ denote the index of the oversampling strategy to be specified later on and let 
\begin{align*}
 \{ w_{h,T,i}^{\mathcal{U},\alpha}\;|\; \hspace{2pt} 1 \le i \le d, \hspace{2pt} T \in \T_H \}
\end{align*}
denote a given local corrector basis that depends on the chosen strategy $\alpha$ (see (\ref{e:msfem2}.a)-(\ref{e:msfem2}.c) for the trivial case of such a basis). Then, a (not necessarily conforming) correction operator $Q_h^{\mathcal{U},\alpha} : V_H \rightarrow V_{h,\T_H}$ is defined by 
\begin{align}
\label{relation-corrector-and-basis} Q_h^{\mathcal{U},\alpha}(\Phi_H)(x):= \sum_{T \in \T_H} \chi_{T}(x) \sum_{i=1}^d \partial_{x_i} \Phi_H(x_T) \hspace{2pt} w_{h,T,i}^{\mathcal{U},\alpha}(x) \quad \mbox{for} \enspace \Phi_H \in V_H.
\end{align}
The MsFEM approximation $u_H^{\alpha} + Q_h^{\mathcal{U},\alpha}(u_H^{\alpha})$ obtained with strategy $\alpha$ in Petrov-Galerkin formulation reads: find $u_H^{\alpha} \in V_H$ such that
\begin{align}
\label{coarse-scale-equation-in-framework-pg}\sum_{T \in \T_H} \int_{T} A \left( \nabla u_H^{\alpha} + \nabla Q_h^{\mathcal{U},\alpha}(u_H^{\alpha}) \right) \cdot \nabla \Phi_H  = \int_{\Omega} f \Phi_H\quad\text{for all }\Phi_H \in V_H. 
\end{align}
The MsFEM approximation $u_H^{S,\alpha} + Q_h^{\mathcal{U},\alpha}(u_H^{S,\alpha})$ obtained with strategy $\alpha$ and a (not necessarily equivalent) symmetric formulation is given by: find $u_H^{\alpha,\operatorname{sym}} \in V_H$ with
\begin{align}
\label{symmetric-version-msfem}&\sum_{T \in \T_H} \int_{T} A \left( \nabla u_H^{\alpha,\operatorname{sym}} + \nabla Q_h^{\mathcal{U},\alpha}(u_H^{\alpha,\operatorname{sym}}) \right) \cdot \left( \nabla \Phi_H + \nabla Q_h^{\mathcal{U},\alpha}(\Phi_H) \right) \\
\nonumber&\qquad = \int_{\Omega} f (\Phi_H + Q_h^{\mathcal{U},\alpha}(\Phi_H))
\end{align}
for all $\Phi_H \in V_H$. Observe that strategies can {\it only} differ in the choice of the corrector basis. The remaining structure of the methods is always the same.
\end{definition}

In the subsequent sections, we demonstrate how existing oversampling strategies fit into the framework presented in Definition \ref{oversampling-framework}.

\subsection{Classical strategy initially introduced by Hou and Wu}

The classical oversampling strategy was proposed by Hou and Wu \cite{Hou:Wu:1997} and further used and investigated in several works (c.f. \cite{Efendiev:Hou:Ginting:2004,Chen:Savchuk:2007,Efendiev:Hou:2009}).

Let $T\in \T_H$ be fixed and let $\{\Phi^T_1,\Phi^T_2,...,\Phi^T_{d+1}\}\subset V_H$ denote the basis functions that belong to the $d+1$ nodal points in $T$. Hou and Wu \cite{Hou:Wu:1997} proposed the following oversampling strategy: solve for $\tilde{\Phi}^T_j \in V_h(U(T))$ with
\begin{align}
\label{strategy-hou-wu-local-problem} \int_{U(T)} A \nabla \tilde{\Phi}^T_j \cdot \nabla \phi_h = 0 \quad \mbox{for all } \phi_h \in \mathring{V}_h(U(T))
\end{align}
and the boundary condition $\tilde{\Phi}^T_j=E_T(\Phi^T_j)$ on $\partial U(T)$, where $E_T(\Phi^T_j)$ denotes the affine extension of $(\Phi^T_j)\vert_{T}$. Then, for a given coarse function $\Phi_H \in V_H$, $\Phi_H^{\MsFEM}$ is defined by
\begin{align*}
\Phi_H^{\MsFEM}=\sum_{j=1}^{d+1} c_j \tilde{\Phi}^T_j.
\end{align*}
where the $c_j$ are such that $\Phi_H^{\MsFEM}(x_j)=\Phi_H(x_j)$ for all  $d+1$ coarse nodes $x_j$ of $T$. The final coarse scale equation in Petrov-Galerkin formulation reads: find $u^{\mathbf{1},\mbox{\rm \tiny MsFEM}}_H \in V_{h,\T_H}$ with
\begin{align}
\label{classical-formulation-strategy-1} \int_\Omega A \nabla_H u^{\mathbf{1},\mbox{\rm \tiny MsFEM}}_H \cdot \nabla \Phi_H = \int_{\Omega} f \Phi_H
\end{align}
for all $\Phi_H \in V_H$. Observe that $u_H^{\MsFEM}$ is a nonconforming approximation of $u$. In \cite{Hou:Wu:Zhang:2004,Efendiev:Hou:Wu:2000}, a slightly different condition is used to define the coefficients $c_i$. However, it turns out that this modified condition leads to nothing but Oversampling Strategy 2 below.

We shall rephrase this multiscale method with oversampling strategy in the framework of Definition~\ref{d:framework}. Let $Q_T(\Phi_H):=\Phi_H^{\MsFEM}-E_T(\Phi_H)$ define the local corrector, i.e., an  operator that communicates fine scale information to the coarse scale equation. The corresponding reduced fine scale space $\mathring{V}_h^r(U(T))$ is given by
\begin{align}
\label{def-reduced-space-stragey-1}\mathring{V}_h^r(U(T)) := \mathring{V}_h(U(T)) \setminus \mbox{\rm span}\{\Phi^T_1,\Phi^T_2,...,\Phi^T_{d+1}\}
\end{align}
with nodal basis functions $\{\Phi^T_1,\Phi^T_2,...,\Phi^T_{d+1}\}\subset V_H$.
Since 
\begin{align*}
Q_T(\Phi_H)(x_i)=\Phi_H^{\MsFEM}(x_i)-\Phi_H(x_i)=0 \quad \mbox{for all nodes $x_i$ in $T$},
\end{align*}
$Q_T(\Phi_H) \in \mathring{V}_h^r(U(T))$. Moreover, by the definition of $\Phi_H^{\MsFEM}$, $Q_T(\Phi_H) \in \mathring{V}_h^r(U(T))$ satisfies
\begin{eqnarray*}
\lefteqn{\int_{U(T)} A  \left( \nabla \Phi_H(x_T) + \nabla Q_T(\Phi_H) \right) \cdot \nabla \phi_h}\\
&=& \int_{U(T)} A  \left( \nabla E_T(\Phi_H) + \nabla Q_T(\Phi_H) \right) \cdot \nabla \phi_h\\
&=& \sum_{i=1}^d c_i \int_{U(T)} A \nabla \tilde{\Phi}^T_i \cdot \nabla \phi_h = 0,
\end{eqnarray*}
for all $\phi_h \in \mathring{V}_h^r(U(T))$. Since $\nabla \Phi_H(x_T)$ is a constant in $U(T)$ we may rewrite $Q_T(\Phi_H)$ in terms of a corrector basis. This gives us the first definition of oversampling within our framework. 
\begin{osstrategy}
Let $\mathring{V}_h^r(U(T))$ denote the reduced fine scale space given by (\ref{def-reduced-space-stragey-1}) and let $w_{h,T,i}^{\mathcal{U},\mathbf{1}} \in \mathring{V}_h^r(U(T))$ (for $i \in \{ 1,2,..,d\}$) denote the solution of
\begin{align}
\label{corrector-os1}\int_{U(T)} A \nabla w_{h,T,i}^{\mathcal{U},\mathbf{1}} \cdot \nabla \phi_h = - \int_{U(T)} A e_i \cdot \nabla \phi_h \quad \mbox{for all } \phi_h \in \mathring{V}_h^r(U(T)).
\end{align}
For $\Phi_H \in V_H$ we define the corrector $Q_h^{\mathcal{U},{\mathbf{1}}}(\Phi_H) \in V_{h,\T_H}$ by
\begin{align*}
 Q_h^{\mathcal{U},{\mathbf{1}}}(\Phi_H) := \sum_{T \in \T_H} \chi_{T}(x) \sum_{i=1}^d \partial_{x_i} \Phi_H(x_T) \hspace{2pt} w_{h,T,i}^{\mathcal{U},{\mathbf{1}}}(x).
\end{align*}
Let $u_H^{\mathbf{1}} \in V_H$ be the solution of \eqref{coarse-scale-equation-in-framework-pg}, i.e., 
\begin{align*}
\sum_{T \in \T_H} \int_{T} A \left( \nabla u_H^{\mathbf{1}} + \nabla Q_h^{\mathcal{U},\alpha}(u_H^{\mathbf{1}}) \right) \cdot \nabla \Phi_H  = \int_{\Omega} f \Phi_H \quad \mbox{for all} \enspace \Phi_H \in V_H.
\end{align*}
Then, $u^{\mathbf{1},\mbox{\rm \tiny MsFEM}}_H:=u_H^{\mathbf{1}} + Q_h^{\mathcal{U},{\mathbf{1}}}(u_H^{\mathbf{1}})$ defines the MsFEM approximation obtained with Oversampling Strategy {\bf 1}. Obviously, $u^{\mathbf{1},\mbox{\rm \tiny MsFEM}}_H$ solves \eqref{classical-formulation-strategy-1}.
\end{osstrategy}
\begin{remark}
The explicit boundary condition for the local problems (\ref{strategy-hou-wu-local-problem}) is often missing in literature (c.f. \cite{Hou:Wu:1997,Efendiev:Hou:2009}). However, it seems that these computations were performed for the case described above, i.e. the solution $\tilde{\Phi}^T_i$ of (\ref{strategy-hou-wu-local-problem}) takes the values of an affine function on $\partial U(T)$ (c.f. \cite{Efendiev:Hou:Wu:2000,Hou:Wu:Zhang:2004} which also refer to the numerical experiments in \cite{Hou:Wu:1997}). In some works (c.f. \cite{Efendiev:Hou:Ginting:2004}) the local problems (\ref{strategy-hou-wu-local-problem}) are formulated with the boundary condition $\tilde{\Phi}^T_i=\Phi^T_i$ on $\partial U(T)$. This seems to be a mistake because the new basis functions will be equal to zero whenever $U(T)$ is larger than the support of the original basis functions.
\end{remark}

\subsection{Oversampling motivated from homogenization theory}
The second type of oversampling is motivated from numerical homogenization theory. Assume that we regard
\begin{align*}
\mbox{find} \enspace u^{\varepsilon} \in H^1_0(\Omega) \enspace \mbox{with} \quad \int_{\Omega} A^{\varepsilon} \nabla u^{\varepsilon} \cdot \nabla \Phi = \int_{\Omega} f \Phi \quad \mbox{for all } \Phi \in H^1_0(\Omega)
\end{align*}
and assume that $A^{\varepsilon}$ is uniformly bounded and coercive in $\varepsilon$, that $A^{\varepsilon}$ is $H$-convergent to some matrix $A^0$ and that $u^{\varepsilon} \rightharpoonup u^0$ in $H^1(\Omega)$, where $u^0 \in H^1_0(\Omega)$ is called the homogenized solution.

Then, a numerical approximation of the homogenized solution $u^0$ can be obtained by discretizing a more convenient equation (see (\ref{gloria-equation}) below). For this purpose, let $B(x,\eta)$ denote an open ball centered at $x\in \Omega$ with radius $\eta>0$ and let $N(x,\eta)$ denote an open neighborhood of $x \in \Omega$ with a Lipschitz boundary. It is assumed that there exist $0 < c \le C$ so that for all $\eta>0$ and all $x \in \Omega$ there holds $c|B(x, \eta)| \le |N(x, \eta)| \le C|B(x, \eta)|$. We seek $u^{\varepsilon,\eta,\zeta} \in H^1_0(\Omega)$ that solves
\begin{multline}
\label{gloria-equation} \int_{\Omega} |N(x,\eta)|^{-1}\int_{N(x, \eta)} A^{\varepsilon} (y)\left( \nabla u^{\varepsilon,\eta,\zeta}(x) + \nabla_y Q(u^{\varepsilon,\eta,\zeta})(x,y) \right) \cdot \nabla \Phi(x) \hspace{2pt} dy \hspace{2pt} dx\\=\int_{\Omega} f(x) \Phi(x) \hspace{2pt} dx
\end{multline}
for $\Phi \in H^1_0(\Omega)$, where for given $\Psi \in H^1_0(\Omega)$ the corrector $Q(\Psi)(x,\cdot)\in H^1_0(N(x, \eta+\zeta))$ is determined by 
\begin{align*}
\int_{N(x, \eta+\zeta)} A^{\varepsilon} (y)\left( \nabla \Psi(x) + \nabla_y Q(\Psi)(x,y) \right) \cdot \nabla \phi(y) \hspace{2pt} dy = 0 \text{ for all }\psi \in H^1_0(N(x, \eta+\zeta)).
\end{align*}
If $\zeta=\zeta(\eta)$ and $\lim_{\eta \rightarrow 0} \frac{\zeta(\eta)}{\eta} = 0$ then it holds 
\begin{align*}
\lim_{\eta,\zeta \rightarrow 0} \lim_{\varepsilon \rightarrow 0} \| u^0 - u^{\varepsilon,\eta,\zeta}\|_{H^1(\Omega)} = 0.
\end{align*}
As a consequence thereof, we get that
\begin{align*}
\lim_{\eta,\zeta \rightarrow 0} \lim_{\varepsilon \rightarrow 0} \| u^{\varepsilon} - u^{\varepsilon,\eta,\zeta}\|_{L^2(\Omega)} = 0.
\end{align*}
This result was shown by Gloria \cite{Gloria:2006,Gloria:2008} in a general nonlinear setting. Since $u^{\varepsilon,\eta,\zeta}$ yields a good approximation of $u^{\varepsilon}$, the result suggests to look at discretizations of (\ref{gloria-equation}). This was for instance exploited in \cite{Henning:Ohlberger:2011_2}. A general numerical framework that can be seen as a discretization of (\ref{gloria-equation}) was proposed in \cite{Henning:Ohlberger:Proceedings:2012}. Particularly, the HMM and the MsFEM are recovered from the framework, which leads to a straightforward oversampling strategy. This strategy can be formulated as follows (c.f. \cite{Gloria:2008,Henning:Ohlberger:Proceedings:2012,Henning:Ohlberger:Schweizer:2012}).
\begin{osstrategy}
For $i \in \{ 1,2,..,d\}$, let $w_{h,T,i}^{\mathcal{U},\mathbf{2}} \in \mathring{V}_h^r(U(T))$ solve
\begin{align}
\label{corrector-os2}\int_{U(T)} A \nabla w_{h,T,i}^{\mathcal{U},\mathbf{2}} \cdot \nabla \phi_h = - \int_{U(T)} A e_i \cdot \nabla \phi_h \quad \mbox{for all } \phi_h \in \mathring{V}_h(U(T)).
\end{align}
and for $\Phi_H \in V_H$, let $Q_h^{\mathcal{U},{\mathbf{2}}}(\Phi_H) \in V_{h,\T_H}$ denote the corrector given by (\ref{relation-corrector-and-basis}). If $u_H^{\mathbf{2}} \in V_H$ is the solution of \eqref{coarse-scale-equation-in-framework-pg} , then $u_H^{\mathbf{2}} + Q_h^{\mathcal{U},{\mathbf{2}}}(u_H^{\mathbf{2}})$ defines the MsFEM approximation obtained with Oversampling Strategy {2}.
We therefore denote
\begin{align*}
 u^{\mathbf{2},\mbox{\rm \tiny MsFEM}}_H := u_H^{\mathbf{2}} + Q_h^{\mathcal{U},{\mathbf{2}}}(u_H^{\mathbf{2}}).
\end{align*}
\end{osstrategy}
We immediately see, that Oversampling Strategy {1} and {2} only differ in the fine scale trial space for the local problems and that they are identical for $U(T)=T$, even though Strategy 2 was formulated independently of Strategy 1. In \cite{Hou:Wu:Zhang:2004,Efendiev:Hou:Wu:2000}, Strategy 2 is written in terms of an asymptotic expansion in the periodic case.
Also note that this second approach is closely related to the Heterogeneous Multiscale Finite Element Method, where the same type of oversampling is used (c.f. \cite{E:Engquist:2003,E:Engquist:2003-2,E:Engquist:2005,Henning:Ohlberger:2009}). Notably, HMM and MsFEM can be reinterpreted in a common homogenization framework (c.f. \cite{Gloria:2006,Gloria:2008}) and in a common numerical framework (c.f. \cite{Henning:Ohlberger:Proceedings:2012}).

\subsection{Discussion of the strategies}

As we just discussed, there are two widely used strategies for oversampling for the MsFEM. However, the difference between both approaches is only minor and the behavior of the resulting approximations appears to be qualitatively the same. The small difference in the local trial spaces does not seem to have a significant impact. At least, the error estimates available for strategies 1 and 2 are very similar. The literature does not even distinguish between these strategies. For instance, \cite{Chen:Savchuk:2007} (using Oversampling Strategy 1) claims to generalize the results of \cite{Efendiev:Hou:Wu:2000} (using Oversampling Strategy 2). Such a mixture of strategies can be observed in several works on this topic. To the best of our knowledge, even though both approaches seem to behave identical, a rigorous proof of this conjecture is still missing. Strategy 1 suggests to fix the corrector $Q_T^{\mathbf{1}}(\Phi)$ in the corners of the coarse grid element $T$ (forcing 
it to 
zero), whereas the corrector proposed by Strategy 2 does not have such a restriction leaving it completely free in these corners.

\begin{remark}
\label{gloria-comparison-pg-symmetric}
As already mentioned, the MsFEM might be also considered in a symmetric formulation (c.f. \cite{Efendiev:Hou:Wu:2000}), i.e. the coarse scale equation reads: find $u_H \in V_H$ with
\begin{align*}
&\sum_{T \in \T_H} \int_T A \left( \nabla u_H^{\alpha} + \nabla Q_h^{\mathcal{U},{\alpha}}(u_H^{\alpha}) \right) \cdot \left( \nabla \Phi_H + \nabla Q_h^{\mathcal{U},{\alpha}}(\Phi_H) \right) \\
&\qquad = \int_{\Omega} f (\Phi_H + Q_h^{\mathcal{U},{\alpha}}(\Phi_H))
\end{align*}
for all $\Phi_H \in V_H$ and where $Q_h^{\mathcal{U},{\alpha}}$ is defined either with Oversampling Strategy 1 or 2. However, the theoretical and numerical results in \cite{Hou:Wu:Zhang:2004} show that this version of the method still suffers from resonance errors. One explanation was suggested by Gloria \cite{Gloria:2008} who proposed a simple computation:
\begin{eqnarray*}
\lefteqn{\int_T A \left( \nabla u_H^{\alpha} + \nabla Q_h^{\mathcal{U},{\alpha}}(u_H^{\alpha}) \right) \cdot \left( \nabla \Phi_H + \nabla Q_h^{\mathcal{U},{\alpha}}(\Phi_H) \right)}\\
&=&\int_T A \left( \nabla u_H^{\alpha} + \nabla Q_h^{\mathcal{U},{\alpha}}(u_H^{\alpha}) \right) \cdot \nabla \Phi_H \\
&\enspace& \quad + \int_T A \left( \nabla u_H^{\alpha} + \nabla Q_h^{\mathcal{U},{\alpha}}(u_H^{\alpha}) \right) \cdot \nabla Q_h^{\mathcal{U},{\alpha}}(\Phi_H)\\
&=& \int_T A \left( \nabla u_H^{\alpha} + \nabla Q_h^{\mathcal{U},{\alpha}}(u_H^{\alpha}) \right) \cdot \nabla \Phi_H\\
&\enspace& \quad + \int_{U(T) \setminus T} \hspace{-14pt} A \left( \nabla u_H^{\alpha} + \nabla Q_h^{\mathcal{U},{\alpha}}(u_H^{\alpha}) \right) \cdot \nabla Q_h^{\mathcal{U},{\alpha}}(\Phi_H).
\end{eqnarray*}
This means that the effective MsFEM bilinear forms in Petrov-Galerkin and non-Petrov-Galerkin formulation differ in the term
\begin{align*}
\sum_{T\in\T_H} \int_{U(T) \setminus T} A \left( \nabla u_H^{\alpha} + \nabla Q_h^{\mathcal{U},{\alpha}}(u_H) \right) \cdot \nabla Q_h^{\mathcal{U},{\alpha}}(\Phi_H),
\end{align*}
which still seems to contain the problematic boundary layers that we tried to get rid off. Observe that we integrate over the layer $U(T) \setminus T$. This is exactly the region we encounter unpleasant boundary effects of the correctors $Q_h^{\mathcal{U},{\alpha}}(u_H)$ and $Q_h^{\mathcal{U},{\alpha}}(\Phi_H)$. This might imply that preference should be given to the Petrov-Galerkin formulation. Note, however, that uniqueness and existence of discrete solutions has not been proved for general oversampling so far. 
\end{remark}
Let us review the two essential results concerning the convergence of MsFEM approximations with oversampling. The first result is due to Gloria and the most general result currently available for Strategy 2.
\begin{theorem}
\label{gloria-theorem}
Let $f\in L^2(\Omega)$ and $A^{\varepsilon}\in L^\infty(\Omega,\mathbb{R}^{d\times d})$ be a sequence of (possibly non-symmetric) matrices with uniform spectral bounds $\gmin>0$ and $\gmax\geq\gmin$, 
\begin{equation}\label{e:spectralbound-Aeps}
\sigma(A^{\varepsilon}(x))\subset [\gmin,\gmax]\quad\text{for almost all }x\in \Omega \enspace \mbox{and for all } \varepsilon>0
\end{equation}
and assume that $A^{\varepsilon}$ is $H$-convergent. Let furthermore $u_H^{\varepsilon}\in V_H$ denote the corresponding MsFEM approximation obtained with Oversampling Strategy 2 and let 
\begin{align*}
\frac{\mbox{\rm diam}(U(T)) - \mbox{\rm diam}(T)}{\mbox{\rm diam}(T)}\rightarrow 0 \enspace \mbox{for} \enspace H \rightarrow 0.
\end{align*}
Then we have:
\begin{align*}
\lim_{H \rightarrow 0} \lim_{\varepsilon \rightarrow 0} \| u^{\varepsilon} - u_H^{\varepsilon} \|_{L^2(\Omega)} = 0.
\end{align*}
\end{theorem}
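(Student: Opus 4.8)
\emph{Proof strategy.} The plan is to decouple the double limit, treating the homogenization limit $\varepsilon\to0$ for a fixed coarse mesh $\T_H$ first, and the discretization limit $H\to0$ afterwards. For fixed $\T_H$ and a fixed family of patches $\mathcal{U}$, I would rewrite the local problems \eqref{corrector-os2} in terms of $v_{T,i}^\varepsilon:=w_{h,T,i}^{\mathcal{U},\mathbf{2}}+e_i\cdot(x-x_T)$, which is the $A^\varepsilon$-harmonic function on the fixed Lipschitz domain $U(T)$ with affine boundary data $e_i\cdot(x-x_T)$ on $\partial U(T)$. Since $A^\varepsilon$ is $H$-convergent and $H$-convergence is a local notion, on each $U(T)$ one has $v_{T,i}^\varepsilon\rightharpoonup v_{T,i}^0$ in $H^1(U(T))$ together with the weak flux convergence $A^\varepsilon\nabla v_{T,i}^\varepsilon\rightharpoonup A^0\nabla v_{T,i}^0$ in $L^2(U(T))^d$, where $v_{T,i}^0$ is $A^0$-harmonic on $U(T)$ with the same boundary data. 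Hence the element-wise averaged effective matrices
\begin{align*}
\bar A_T^\varepsilon e_i:=\frac1{|T|}\int_T A^\varepsilon\bigl(e_i+\nabla w_{h,T,i}^{\mathcal{U},\mathbf{2}}\bigr)\ \longrightarrow\ \frac1{|T|}\int_T A^0\nabla v_{T,i}^0=:\bar A_T e_i
\end{align*}
converge, and the uniform bounds \eqref{e:spectralbound-Aeps} pass to the limit, so every $\bar A_T$ is bounded and coercive with the same constants $\gmin,\gmax$.

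Because $\nabla\Phi_H$ and $\nabla u_H^\varepsilon$ are patchwise constant, the Petrov--Galerkin equation \eqref{coarse-scale-equation-in-framework-pg} is, for each $\varepsilon$, a finite-dimensional linear system on $V_H$ whose matrix entries are built from the $\bar A_T^\varepsilon$; these converge entrywise to a uniformly coercive limit, so $u_H^\varepsilon$ converges in $V_H$ to the solution $u_H^0\in V_H$ of
\begin{align}\label{e:gloriaproof-limit}
\sum_{T\in\T_H}\int_T \bar A_T\,\nabla u_H^0\cdot\nabla\Phi_H=\int_\Omega f\Phi_H\qquad\text{for all }\Phi_H\in V_H.
\end{align}
Combined with $u^\varepsilon\to u^0$ strongly in $L^2(\Omega)$ (Rellich), this gives $\|u^\varepsilon-u_H^\varepsilon\|_{L^2}\to\|u^0-u_H^0\|_{L^2}$ as $\varepsilon\to0$. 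The second step is then to show $u_H^0\to u^0$ in $L^2(\Omega)$ as $H\to0$, where $u^0$ is the homogenized solution solving $\int_\Omega A^0\nabla u^0\cdot\nabla\Phi=F(\Phi)$. Here the hypothesis $(\operatorname{diam}U(T)-\operatorname{diam}T)/\operatorname{diam}T\to0$ is used: it forces $\operatorname{diam}U(T)\to0$ and $|U(T)|/|T|\to1$, so that the affine Dirichlet datum on $\partial U(T)$ propagates, after solving the $A^0$-harmonic problem, to something close to affine on $\partial T$. I would then invoke a Tartar-type oscillating-test-function argument (equivalently, the Murat--Tartar div--curl lemma applied to the pairs $(A^\varepsilon\nabla v_{T,i}^\varepsilon,\nabla\Phi_H)$, and to their limits) to identify the piecewise-constant coefficient field $\sum_{T}\chi_T\bar A_T$ as converging to $A^0$, say in $L^1(\Omega)^{d\times d}$, with the deviation controlled by the modulus of $H$-convergence on the patches together with the relative layer thickness. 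Stability of \eqref{e:gloriaproof-limit} plus convergence of coefficients yields $u_H^0\rightharpoonup u^0$ in $H^1_0(\Omega)$, hence $u_H^0\to u^0$ in $L^2(\Omega)$; chaining the two steps gives $\lim_{H\to0}\lim_{\varepsilon\to0}\|u^\varepsilon-u_H^\varepsilon\|_{L^2}=0$.

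The main obstacle is the consistency assertion $\bar A_T\to A^0$ in the second step. Since $A^0$ arises only through $H$-convergence and carries no structural information (no periodicity, no explicit cell correctors), the effective matrix extracted from the oversampling problem on $U(T)$ cannot be computed and must be matched to $A^0$ by compensated compactness; this is precisely where the relative-layer-thickness assumption is indispensable, guaranteeing that the artificial affine boundary condition on $\partial U(T)$ does not contaminate the averaged flux over $T$ in the limit — without it a boundary layer along $\partial U(T)$ survives and reintroduces a resonance-type error. Two subsidiary technical points need care: the global corrector $Q_h^{\mathcal{U},\mathbf{2}}(\Phi_H)$ lies in the nonconforming space $V_{h,\T_H}$, but its jumps across coarse edges are harmless in \eqref{coarse-scale-equation-in-framework-pg} because the test functions $\Phi_H\in V_H$ are conforming and $\nabla\Phi_H$ is patchwise constant, so everything can be done patch by patch; and the whole argument is purely qualitative, delivering no convergence rate, which is intrinsic to working at the level of $H$-convergence rather than with a quantified homogenization error.
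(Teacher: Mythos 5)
The first thing to say is that the paper does not prove Theorem \ref{gloria-theorem} at all: it is a recalled result, with the proof explicitly deferred to \cite[Theorem 5]{Gloria:2012} and \cite[Theorem 6 and Remark 7]{Gloria:2008}. So there is no in-paper argument to compare against, and your sketch has to stand on its own against that literature. Its architecture --- localize $H$-convergence to the fixed patches $U(T)$, pass to the limit $\varepsilon\to 0$ in the local correctors and their fluxes to obtain a piecewise-constant effective tensor, and then identify that tensor with $A^0$ as $H\to 0$ via compensated compactness, using the relative-layer-thickness hypothesis to control the artificial boundary layer --- is the right skeleton and does match the structure of Gloria's analytical framework for windowing and oversampling.

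There are, however, two genuine gaps. First, you assert that the limit matrices $\bar A_T$ are coercive ``with the same constants $\gmin,\gmax$'' and conclude that the Petrov--Galerkin coarse system is uniformly invertible. This does not follow: the corrector is computed on $U(T)$ but the flux is averaged only over $T$, so the standard energy argument that yields coercivity of an averaged tensor is unavailable, and the paper itself stresses that existence, uniqueness and stability of the PG MsFEM with oversampling are \emph{not} known in general --- only a perturbation argument for sufficiently thin layers (Gloria) rescues well-posedness. Without invoking that perturbation argument (which is where the hypothesis $(\operatorname{diam}U(T)-\operatorname{diam}T)/\operatorname{diam}T\to 0$ already enters), your first limit $u_H^\varepsilon\to u_H^0$, and indeed the existence of $u_H^\varepsilon$ for fixed $\varepsilon$ and $H$, is unsupported. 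Second, the step that carries essentially all the weight, namely $\sum_{T}\chi_T\bar A_T\to A^0$, is only named (``I would then invoke a Tartar-type argument''), not executed; showing that the boundary layer on $U(T)\setminus T$ created by the affine Dirichlet data does not contaminate the averaged flux over $T$ in the thin-layer limit is precisely the content of Gloria's proof, and you correctly identify it as the main obstacle without resolving it. A smaller point: the theorem is implicitly stated for exact local solves (cf.\ issue (e) in Section 3.3), and your sketch silently replaces $\mathring{V}_h(U(T))$ by $H^1_0(U(T))$; that is consistent with the intended reading but should be made explicit. As written, the proposal is a plausible outline with its two critical steps left open rather than a proof.
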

The proof for general non-symmetric coefficients is given in \cite[Theorem 5]{Gloria:2012} and the case of nonlinear problems is presented in \cite[Theorem 6 and Remark 7]{Gloria:2008}. At first glance the result appears counter-intuitive in the sense that  it suggests to let the oversampling converge to zero. However, the first limit is in $\varepsilon$, which makes the relative thickness $\frac{\varepsilon}{\dmin}$ of the oversampling layer grow to infinity. Hence, the correct interpretation is that for fixed $\varepsilon$ the computational domains should blow up to infinity. In this case, the optimal corrector problem is an equation formulated on whole $\mathbb{R}^d$. These corrector problems are exactly the cell problems known from periodic and stochastic homogenization theory. In the periodic setting the classical cell problem can be extended to the $\mathbb{R}^d$ by periodicity and in the stochastic setting they are directly formulated in $\mathbb{R}^d$ to obtain the correct stochastic average (c.f. \cite{Jikov:Kozlov:Oleinik:1994}).

Theorem \ref{gloria-theorem} gives a clear message in the case of extremely small microscopic variations. If $\varepsilon$ (the characteristic length scale of the fine scale oscillations) is (globally) sufficiently small then the resulting MsFEM approximation yields very good approximations. This is a very important result, but it is purely qualitative. E.g., it does not answer the question how (thick) to choose an oversampling patch. We cannot predict how the method behaves if there is a large spectrum of oscillations without a scale separation. For instance, we might encounter variations, where it is hard to tell which of them are macroscopic and which are microscopic (i.e. '$\varepsilon$-dependent'). In practice, we do not construct an artificial sequence in $\varepsilon$, we only have a given scenario and a given set of data.

$\\$
The next theorem due to Hou, Wu and Zhang is much more restrictive, but it gives a more quantitive answer than Theorem \ref{gloria-theorem}:
\begin{theorem}
\label{Hou:Wu:Zhang:2004-theorem}
Assume that $d=2$, $f\in L^2(\Omega)$ and that $A$ is a bounded, elliptic, symmetric and $\varepsilon$-periodic $C^3$-matrix, i.e. $A(x)=A_p(\frac{x}{\varepsilon})$, with $A_p \in C^3( [0,1]^d ,\mathbb{R}^{d\times d}_{sym})$ being periodic. Let $u_H^{\varepsilon}\in V_{h,\T_H}$ denote the MsFEM approximation obtained with Oversampling Strategy 2. Then:
\begin{align*}
 \|u_H^{\varepsilon} - u^{\varepsilon} \|_{L^2(\Omega)} &\le C \left( \frac{\varepsilon}{\dmin} + H + \varepsilon (\log H)^{\frac{1}{2}} \right),\\
 \left( \sum_{T \in \T_H} \|\nabla u_H^{\varepsilon} - \nabla u^{\varepsilon}\|_{L^2(\Omega)}^2 \right)^{\frac{1}{2}} &\le C \left( \frac{\varepsilon}{\dmin} + H + \varepsilon^{\frac{1}{2}} \right).
\end{align*}
\end{theorem}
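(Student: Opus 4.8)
The plan is to reduce the claimed estimate to two classical ingredients: first, the standard homogenization error estimate for the exact solution $u^\varepsilon$ versus its homogenized counterpart $u^0$ (with the well-known $H^1$-corrector estimate and the $\varepsilon(\log H)^{1/2}$ boundary-layer term coming from the boundary-layer corrector near $\partial\Omega$, as in the periodic homogenization literature for $C^3$ coefficients in two dimensions), and second, an estimate comparing the MsFEM approximation $u_H^\varepsilon$ obtained with Oversampling Strategy 2 to $u^0$ (equivalently, to the $P_1$-interpolant of $u^0$ plus its discrete correctors). The resonance term $\varepsilon/\dmin$ is precisely what separates the discrete local corrector $w_{h,T,i}^{\mathcal{U},\mathbf{2}}$ on the oversampling patch $U(T)$ from the periodic cell corrector $\chi_i(\cdot/\varepsilon)$: on a patch of size $\operatorname{diam}(U(T))$ the mismatch between the artificial boundary condition on $\partial U(T)$ and the true periodic behaviour propagates inward, but it decays away from $\partial U(T)$, so its contribution in the interior element $T$ is controlled by the ratio of $\varepsilon$ to the oversampling thickness $d_{\mathcal{U},T}\geq\dmin$.

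Concretely, I would proceed as follows. Step one: write $u^\varepsilon = u^0 + \varepsilon\sum_i \partial_i u^0\,\chi_i(\cdot/\varepsilon) + \theta^\varepsilon$ with the periodic correctors $\chi_i$ and the boundary-layer remainder $\theta^\varepsilon$ satisfying $\|\nabla\theta^\varepsilon\|_{L^2(\Omega)}\lesssim \varepsilon^{1/2}$ and $\|\theta^\varepsilon\|_{L^2(\Omega)}\lesssim\varepsilon$ (the $\varepsilon^{1/2}$ is genuinely needed in the $H^1$-type bound, the sharper $\varepsilon(\log H)^{1/2}$ in $L^2$ comes from a duality/Aubin–Nitsche argument together with the logarithmic factor typical of two-dimensional estimates). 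Step two: for each $T$, compare $w_{h,T,i}^{\mathcal{U},\mathbf{2}}$ restricted to $T$ with $\varepsilon\chi_i(\cdot/\varepsilon)$ minus its affine part; using the energy minimization characterizing \eqref{corrector-os2} and a Caccioppoli-type interior estimate one obtains $\|\nabla(w_{h,T,i}^{\mathcal{U},\mathbf{2}} - \varepsilon\chi_i(\cdot/\varepsilon))\|_{L^2(T)}\lesssim (\varepsilon/d_{\mathcal{U},T})^{1/2}$ per element, plus the genuine fine-mesh discretization error $\lesssim h/\varepsilon$ which we absorb under the standing assumption $h\ll\varepsilon$. Step three: insert $v_H := I_H u^0 + Q_h^{\mathcal{U},\mathbf{2}}(I_H u^0)$ as a test/comparison function in the Petrov–Galerkin scheme \eqref{coarse-scale-equation-in-framework-pg}, use the inf–sup stability of that scheme (which, in the periodic regime and for oversampling thickness not too large relative to $H$, follows from a perturbation of the $V_H$-coercivity of the homogenized form $a^0$), and estimate the consistency error $a^0(I_H u^0 - u^0,\Phi_H)$ plus the corrector mismatch from step two; summing the per-element bounds over $\mathcal{O}(H^{-2})$ elements and using $\sum_T d_{\mathcal{U},T}^{-1}\cdot|T| \lesssim \dmin^{-1}$ yields the $\varepsilon/\dmin$ contribution. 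Step four: combine the triangle inequalities $\|u_H^\varepsilon - u^\varepsilon\| \le \|u_H^\varepsilon - (u^0+\text{correctors})\| + \|(u^0+\text{correctors}) - u^\varepsilon\|$ in both the broken-$H^1$ norm and (via duality) the $L^2$ norm, collecting the three terms $\varepsilon/\dmin$, $H$, and $\varepsilon(\log H)^{1/2}$ (resp. $\varepsilon^{1/2}$).

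The main obstacle, and the place where the $C^3$-periodicity and $d=2$ hypotheses are really consumed, is the quantitative interior decay estimate in step two: one must show that replacing the true periodic oscillation by the harmonic-type extension with affine boundary data on $\partial U(T)$ introduces an error in $T$ that scales like $(\varepsilon/d_{\mathcal{U},T})^{1/2}$ in energy. This is essentially the resonance-error analysis of Hou–Wu–Zhang and requires sharp regularity of the periodic correctors (hence $C^3$), a careful boundary-layer expansion near $\partial U(T)$, and — for the logarithmic improvement in the $L^2$ bound — a two-dimensional Green's function estimate with its characteristic $\log$ singularity. The Petrov–Galerkin inf–sup stability in step three is a secondary difficulty, since, as noted in Remark \ref{gloria-comparison-pg-symmetric}, existence and uniqueness for general oversampling is delicate; here one exploits that in the periodic regime with controlled relative oversampling thickness the effective bilinear form is a small perturbation of the coercive homogenized form, so stability holds once $H$ and $\varepsilon/\dmin$ are small enough, which is consistent with the error bound being meaningful precisely in that regime.
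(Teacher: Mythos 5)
The paper does not prove this statement: Theorem \ref{Hou:Wu:Zhang:2004-theorem} is quoted as a known result, with the proof attributed entirely to \cite{Hou:Wu:Zhang:2004}, and the mild generalization from ``sufficiently large oversampling layer'' to the explicit $\varepsilon/\dmin$ term is only asserted to follow by tracking their arguments (cf.\ \cite{Efendiev:Hou:2009} for $\dmin=CH$). There is therefore no in-paper argument to compare yours against; what you have written is a reconstruction of the Hou--Wu--Zhang analysis, and its overall architecture (homogenization expansion with boundary-layer remainder, comparison of the oversampled discrete correctors with the periodic cell correctors, insertion of a corrected interpolant into the Petrov--Galerkin scheme, duality for the $L^2$ bound) is indeed the right skeleton for that proof.

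As a proof, however, the sketch contains a concrete quantitative inconsistency and defers the genuinely hard steps. In step two you claim a per-element corrector error of order $(\varepsilon/d_{\mathcal{U},T})^{1/2}$ in energy; squaring, summing over the $O(H^{-2})$ elements and taking the square root then yields $(\varepsilon/\dmin)^{1/2}$ in the global broken-$H^1$ bound, not the $\varepsilon/\dmin$ the theorem asserts. The correct local statement --- and the one the paper's own discussion records, namely that ``the decay turns out to be inverse proportional to the thickness of the layer'' so that ``the effective term becomes $\frac{\varepsilon}{\dmin}$'' --- is a linear rate $\varepsilon/d_{\mathcal{U},T}$, not its square root; with your exponent the final estimate is strictly weaker than claimed, so either the local estimate or the summation step must be repaired. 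Beyond that, the three load-bearing ingredients are named but not established: the interior decay of the boundary mismatch on $U(T)$ (where the $C^3$-regularity and $d=2$ hypotheses are actually consumed), the two-dimensional Green's function/duality argument producing the $\varepsilon(\log H)^{1/2}$ term, and the inf--sup stability of the Petrov--Galerkin form, which, as Remark \ref{gloria-comparison-pg-symmetric} emphasizes, is not known in general and is only available as a perturbation result for thin oversampling --- a regime in tension with taking $\dmin$ large enough for the $\varepsilon/\dmin$ term to be small. The proposal is a correct map of the territory rather than a proof.
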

A proof of this theorem is given in \cite{Hou:Wu:Zhang:2004}. The assumption $d=2$ seems to be essential for their strategy. Note that in \cite{Hou:Wu:Zhang:2004} the theorem is formulated without the $\frac{1}{\dmin}$ contribution. Instead, the authors make the assumption that the oversampling layer is sufficiently large. Following their proofs one can easily see that the generalized estimate reads as above (c.f. \cite{Efendiev:Hou:2009} for the case $\dmin=CH$). In particular, the $\frac{\varepsilon}{\dmin}$-term describes the decay of the error between the exact corrector and the corrector with wrong boundary condition in a coarse element $T$. The decay turns out to be inverse proportional to the thickness of the layer. Because of the $\varepsilon$ scaling of the solution, the effective term becomes $\frac{\varepsilon}{\dmin}$. This seems to be a sharp estimate for the decay due to the findings in \cite{Efendiev:Hou:Wu:2000} and \cite{Chen:Hou:2003}. A proof of Theorem \ref{Hou:Wu:Zhang:2004-theorem} for 
Oversampling 
Strategy 1 can be achieved in the same fashion as in \cite{Hou:Wu:Zhang:2004}. Theorem \ref{Hou:Wu:Zhang:2004-theorem} predicts the following: if locally  O$(H)$=O$(\varepsilon)$ the patch size of the local problems must not be of order O$(H)$ to preserve convergence. Still, the theorem only gives an answer of how to choose the oversampling patches $\mathcal{U}$ if $\varepsilon$ is a known parameter.

If the thickness of the oversampling layer is of order O$(h)$ both estimates in Theorem \ref{Hou:Wu:Zhang:2004-theorem} receive an order O$(\frac{\varepsilon}{h})$ term and the right hand sides remain large. In general, the thickness $\dmin$ must be large in comparison to $\varepsilon$. Analytically, this implies that O$(H)$-oversampling might be not enough in regions where we deal with resonance errors due to O$(H)$=O$(\varepsilon)$.
This seems to show up in the numerical experiments in \cite{Hou:Wu:Zhang:2004}, where the authors observe a stagnation in the convergence of the $H^1$-error for $H$ entering the region with O$(H)$=O$(\varepsilon)$. The effect on the $L^2$-error is less strong. However, the value of $\dmin$ is missing in the experiments in \cite{Hou:Wu:Zhang:2004} so we can only assume that $\dmin$ is of order $H$. Otherwise the computation of the MsFEM basis functions becomes quite expensive. However, we note that there also exists a modification of Oversampling Strategy 2 proposed by Gloria (c.f. \cite[Theorem 3.1]{Gloria:2011} and \cite[Paragraphs 5.3 and 5.4]{Gloria:2012}) where the local problems are regularized by adding the term $\kappa^{-1} (w_{h,T,i}^{\mathcal{U},\mathbf{2}},\phi_h)_{L^2(U(T))}$ (for large $\kappa>0$) to the left hand side of problem (\ref{corrector-os2}). Using this modified strategy, the Theorem-\ref{Hou:Wu:Zhang:2004-theorem}-type estimates can be improved enormously, even without restrictions on 
space dimension and much weaker assumptions on the regularity of $A^{\varepsilon}$.

\begin{remark}
In \cite{Efendiev:Hou:Wu:2000}, the symmetric version (\ref{symmetric-version-msfem}) of the MsFEM is considered. Here, the derived $L^2$-estimate reads: 
\begin{align*}
 \|u_H^{\varepsilon} - u^{\varepsilon} \|_{L^2(\Omega)} &\le C \left( \varepsilon + H^2 + \varepsilon (\log H) + \frac{\varepsilon}{\dmin} + C_r \left(\frac{\varepsilon}{H}\right)^2 \right),
\end{align*}
where $u_H^{\varepsilon}$ denotes the MsFEM approximation of the symmetric problem (\ref{symmetric-version-msfem}) determined with Oversampling Strategy 1. Due to the numerical experiments in \cite{Efendiev:Hou:Wu:2000}, $C_r$ seems to have a considerable size, so that this term is dominating the estimate if locally O$(\varepsilon)=$O$(H)$. We notice that the estimate is worse then the $L^2$-estimate for the Petrov-Galerkin version of the method, because the last term cannot be reduced even for large $\dmin$. These observations are consistent with Remark \ref{gloria-comparison-pg-symmetric}. However, this leads to an additional problem of the MsFEM with Strategy 1 or Strategy 2. On the one hand, the Petrov-Galerkin version should be preferred over the symmetric version (see the estimates). On the other hand, the existence and uniqueness of the corresponding MsFEM approximations has not been established so far, not to mention stability. For the symmetric version, we can simply exploit the ellipticity of $A$ 
to 
conclude that 
the method is well posed and stable. For the Petrov-Galerkin version there is no such argument. The only result is a perturbation result due to Gloria \cite{Gloria:2008}, saying that if the oversampling size is small enough (i.e. if the difference between Petrov-Galerkin formulation and symmetric formulation is small enough) then we still have existence and uniqueness. The lack of knowledge regarding the general well-posedness of the Petrov-Galerkin MsFEM with Strategy 1 and 2 is a big issue of these approaches.
\end{remark}
The $\varepsilon$-terms in the estimates that cannot be reduced with $H\gtrsim\varepsilon$ should be seen as fixed modeling errors. They describe the error between exact solution and homogenized solution. In a general non-periodic non-stochastic scenario they cannot be quantified.

In conclusion we have two findings. First, in general, both approaches do not show clear asymptotics for a convergence to the exact solution (for $H\gtrsim\varepsilon$). There is always a remainder of order $\varepsilon$, even if $U(T)=\Omega$. In particular, this is a problem if $\varepsilon$ unknown or if the micro structure is heterogeneous. Second, if the modeling error of order $\varepsilon$ is negligible, Theorem \ref{Hou:Wu:Zhang:2004-theorem} still suggests that linear convergence (with respect to $H$) can only be achieved  if the oversampling thickness scales with O$(1)$ which makes the local problems prohibitively expensive. Since the estimate for the decay rate $\frac{\varepsilon}{\dmin}$ of the corrector error is sharp (in the periodic setting) we cannot hope for much improvement of the final error estimates stated in Theorem \ref{Hou:Wu:Zhang:2004-theorem}.

$\\$
We may summarize the following issues that we address to solve with our new oversampling strategy to be proposed in the next section:
\begin{itemize}
 \item[a)] elimination of resonance errors of any kind,
 \item[b)] clear prediction for the size of oversampling patches without explicit knowledge about the micro-structure or scale separation,
 \item[c)] construction of a conforming approximation in $H^1_0(\Omega)$,
 \item[d)] a quantitative error analysis in $H$ without restrictive regularity assumptions on the coefficients and for all space dimensions,
 \item[e)] a-priori error estimation in the fully discrete setting (previous results were obtained under the assumption that the local problems are solved exactly),
 \item[f)] formulation of a stable approach for which we can guarantee existence and uniqueness of the resulting MsFEM approximation,
 \item[g)] prevention of unstable splittings due to point evaluations as, e.g., required to implement the constraint in oversampling strategy 1 (cf. Definition of $\mathring{V}_h^r(U(T))$ in \eqref{def-reduced-space-stragey-1}).
\end{itemize}
Note that points d) and e) could be done in the periodic setting for Strategy 1 and 2 by using e.g. the techniques presented in \cite{Gloria:2011,Gloria:2012}.
\section{Constrained oversampling}
\label{section-new-oversampling-strategy}

In this section we introduce a third oversampling strategy for which we derive a quantitative a-priori error estimate. The results are presented in Subsection \ref{subsection-new-strategy} and a corresponding proof is given in Subsection \ref{subsection-proof-main-result}. All the results require solely the assumptions stated in Section \ref{subsection-setting} to be satisfied, i.e., $A\in L^\infty(\mathbb{R}^{d\times d}_{\operatorname{sym}})$  uniformly positive definite and $f\in L^2(\Omega)$.

\subsection{New strategy and quantitative error estimates}
\label{subsection-new-strategy}
In the following, let $\mathcal{N}$ denote the set of Lagrange points of the coarse grid $\mathcal{T}_H$. For a given node $z \in \mathcal{N}$, $\Phi_z \in V_H$ denotes the corresponding nodal basis function as before.

Our new approach is based on some multiscale decomposition of the space $V_h$,
\begin{equation}\label{e:decomposition}
V_h = V_H \oplus \Vf,
\end{equation}
where the space $\Vf$ contains the 'fine scale' functions of $V_h$, i.e., functions that are not captured by $V_H$. More precisely, we choose $\Vf$ to be the kernel of some Cl\'ement-type quasi-interpolation operator $\Ic : H^1_0(\Omega) \rightarrow V_H$, 
\begin{align}\label{e:Vf}
\Vf:= \{ v \in V_h\;|\;\enspace \Ic(v)=0\}.
\end{align}
Several choices for $\Ic$ are possible. We refer to \cite{2011arXiv1110.0692M} for an axiomatic characterization. In this paper, for the sake of simplicity, we we choose the particular operator introduced in \cite{MR1736895}. Given $v\in H^1_0(\Omega)$, $\Ic v := \sum_{z\in\mathcal{N}}(\Ic v)(z)\Phi_z$ is determined by the nodal values 
\begin{equation}\label{e:clement}
 (\Ic v)(z):=\frac{\int_{\Omega} v \Phi_z}{\int_{\Omega} \Phi_z}\quad \text{ for }z\in\mathcal{N}.
\end{equation}
The nodal values are weighted averages of the function over nodal patches $\omega_{z}:=\operatorname{supp}\Phi_z$.
The operator is linear, surjective, bounded, and invertible on the finite element space $V_H$. Hence, the decomposition  \eqref{e:decomposition} exists and is stable; it is even orthogonal in $L^2(\Omega)$. 

Recall the (local) approximation and stability properties of the interpolation operators $\Ic$ \cite{MR1736895}: There exists a generic  constant $C$ such that for all $v\in H^1_0(\Omega)$ and for all $K\in\T_H$ it holds
\begin{equation}\label{e:interr}
 H_T^{-1}\|v-\Ic v\|_{L^{2}(K)}+\|\nabla(v-\Ic v)\|_{L^{2}(K)}\leq C \| \nabla v\|_{L^2(\omega_K)},
\end{equation}
where $\omega_K:=\cup\{K'\in\T_H\;\vert\;K'\cap K\neq\emptyset\}$. The constant $C$ depends on the shape regularity of the finite element mesh $\T_H$ but not on the local mesh size $H_T:=\operatorname{diam}(T)$.

\begin{remark}[Nodal interpolation]
Since we consider a fully discrete setting, where corrector problems are solved in the fine scale finite element space $V_h$, we could have chosen nodal interpolation instead of Cl\'ement-type interpolation. The subsequent definitions and results will be almost verbatim the same. However, nodal interpolation does not satisfy the estimate \eqref{e:interr} with an $h$-independent constant if $d>1$. The best constant $C=C_d(h)$ reads $C_2(h)=\log(H/h)$ and $C_3(h)=(H/h)^{-1}$  depending on the spatial dimension $d$ (c.f. \cite{Yserentant:1986}). Since this constant enters basically all error estimates below, we would end up with an $h$-dependence of the multiplicative constants in the final error estimates. In 2d this can  still be acceptable because the dependence on $h$ is only logarithmic. 
\end{remark}

With the decomposition \eqref{e:decomposition} we do not search the local correctors in the full fine scale space $V_h$, but only in the constrained space $\Vf$. The advantage is the following: as stated in the previous section for oversampling strategy 1 and 2, the standard decay for the difference between the local correctors and the global 'exact' corrector is of order $\frac{1}{\dmin}$ (see Theorem \ref{Hou:Wu:Zhang:2004-theorem}), but in the constrained space $\Vf$ we can achieve an exponential-type decay (cf. Lemma~\ref{l:decay} below).

We now propose our new Oversampling Strategy:
\begin{osstrategy}[Constrained oversampling]
Let $\Vf$ denote the space given by \eqref{e:Vf} and define 
\begin{equation}
\label{localspacedefos3} \oVf(U(T)):=\{v_h\in \Vf\;\vert\;
v_h\vert_{\Omega\setminus U(T)}=0\}.
 \end{equation}
The local correctors $w_{h,T,i}^{\mathcal{U},\mathbf{3}} \in \oVf(U(T))$ (for $i \in \{ 1,2,..,d\}$) are defined as the (unique) solutions of 
\begin{align}\label{e:correctorlocal}
\int_{U(T)} A \nabla w_{h,T,i}^{\mathcal{U},\mathbf{3}} \cdot \nabla \phi_h = - \int_{T} A e_i \cdot \nabla \phi_h
 \quad \mbox{for all } \phi_h \in \oVf(U(T)).
\end{align}
For general $\Phi_H \in V_H$ we define the correction operator $Q_h^{\mathcal{U},{\mathbf{3}}}: V_H \rightarrow V_h$ by
\begin{align*}
Q_h^{\mathcal{U},{\mathbf{3}}}(\Phi_H)(x):= \sum_{T \in \T_H} \sum_{i=1}^d \partial_{x_i}\Phi_H(x_T) w_{h,T,i}^{\mathcal{U},\mathbf{3}}(x).
\end{align*}
The global coarse scale approximation $u_H^{\mathbf{3}} \in V_H$ is the solution of (\ref{symmetric-version-msfem}), i.e. it solves
\begin{eqnarray}
\label{e:MsVMMsym}\nonumber\lefteqn{\mathcal{A}^{\mathbf{3}}(u_H^{\mathbf{3}},\Phi_H) :=\int_{\Omega} A \left( \nabla u_H^{\mathbf{3}} + \nabla Q_h^{\mathcal{U},{\mathbf{3}}}(u_H^{\mathbf{3}}) \right) \cdot \left( \nabla \Phi_H + \nabla Q_h^{\mathcal{U},{\mathbf{3}}}(\Phi_H) \right)}\\
&=& \int_{\Omega} f (\Phi_H + Q_h^{\mathcal{U},{\mathbf{3}}}(\Phi_H)) \quad\text{for all }\Phi_H \in V_H. 
\end{eqnarray}
The corresponding MsFEM approximation is given by
\begin{align*}
 u^{\mathbf{3},\mbox{\rm \tiny MsFEM}}_H := u_H^{\mathbf{3}} + Q_h^{\mathcal{U},{\mathbf{3}}}(u_H^{\mathbf{3}}).
\end{align*}
\end{osstrategy}

Using the above definition of the localized space $\oVf(U(T))$ does not assure that our new method boils down to the classical MsFEM in the case without oversampling. Nevertheless, this can be achieved by introducing a localized interpolation operator. Given some element $T\in\T_H$ and an admissible patch $U(T)$, we can define $\Icloc{U(T)}$ to be the Cl\'ement-type quasi-interpolation operator with respect to the domain $U(T)$ (with extension by zero in $\Omega\setminus U(T)$). Then, the localized space $\oVf(U(T))$ can be defined in analogy to $\Vf$ with $\Icloc{U(T)}$ replacing $\Ic$. With this modification we obtain the classical MsFEM for $U(T)=T$. This is only a subtle detail and all results still remain valid for these modified local spaces, however, this version would generate some technicalities in the proofs later on, which is why we decided to work with the definition (\ref{localspacedefos3}).

\begin{remark}
The crucial differences between the classical Strategies 1, 2 and Oversampling Strategy 3 are the following:
\begin{itemize}
 \item[a)] The variational problem for the local corrector in Oversampling Strategy 3 is posed in the constrained space $\mathring{W}_h(U(T))$ whereas the classical corrector problem seeks the local corrector in the full space $\mathring{V}_h(U(T))$ restricted to the patch.
 \item[b)] The support of the integrals on the right hand side in (\ref{corrector-os1}) and (\ref{corrector-os2}) is $U(T)$. In our new version we use only the element $T$. This allows us to exploit nice summation properties of the local projectors, without using indicator functions $\chi_T$ that lead to discontinuities.
 \item[c)] In the classical setting, the local correctors are restricted to the corresponding elements to derive the global corrector. For Oversampling Strategy 3, we simply sum up (weighted by the coefficients of the finite element function) the local contributions to get the global corrector. Note that our global corrector is conforming in the sense that its image is a subset of $V_h\subset H^1_0(\Omega)$ whereas the classical setup leads to a non-conforming corrector.
 \item[d)] In Strategy 3, we do not use a Petrov-Galerkin formulation for the global problem (\ref{e:MsVMMsym}). Since $A$ is assumed symmetric, a symmetric discretization appears more natural. Furthermore, we immediately inherit coercivity for the global bilinear form $\mathcal{A}^{\mathbf{3}}$. This gives us existence and uniqueness of $u_H^{\mathbf{3}}$ and the arising MsFEM approximation is well posed and the method stable. The typical disadvantage of the symmetric version which still suffers from resonance errors (which is why Petrov-Galerkin is typically preferred) does not remain for our strategy.
 \item[e)] In contrast to Oversampling Strategy 1 and 2, the corrector $Q_h^{\mathcal{U},{\mathbf{3}}}(\Phi_H)$ does not preserve the support of $\Phi_H$. In other words, the set of multiscale basis functions $\Phi_z+ Q_h^{\mathcal{U},{\mathbf{3}}}(\Phi_z)$ with $z \in \mathcal{N}$ has an extended support. This results in a loss of sparsity in the stiffness matrix that corresponds with the global problem (\ref{e:MsVMMsym}). In order to still assemble the stiffness matrix in an efficient way, one might store the intersection domain for each given two oversampling patches (in storage types with low memory requirements). This can be easily done at the same time when the grids for the local patches are generated. Once all intersection domains are available, the matrix can be efficiently assembled. A quadrature rule that resolves the micro-structure is needed for each of the strategies.
\end{itemize}
\end{remark}

\begin{remark}[Perturbation of the right hand side]
We might also replace the right hand side of \eqref{e:MsVMMsym} by the term $\int_{\Omega} f \Phi_H$.
This introduces only a perturbation of order $\|H f\|_{L^2(\Omega)}$ in the $H^1$-error.
\end{remark}

\begin{remark}[Non-Symmetric formulation]
As for the classical strategies, one might also consider the non-symmetric Petrov-Galerkin formulations: find $u_H^{\mathbf{3}} \in V_H$ such that
\begin{align*}
\int_{\Omega} A \left( \nabla (u_H^{\mathbf{3}} + Q_h^{\mathcal{U},{\mathbf{3}}}(u_H^{\mathbf{3}})) \right) \cdot \nabla \Phi_H = \int_{\Omega} f \Phi_H \quad \mbox{for all } \Phi_H \in V_H.
\end{align*}
or
\begin{align*}
\int_{\Omega} A \left( \nabla u_H^{\mathbf{3}}  \right) \cdot \nabla (\Phi_H+Q_h^{\mathcal{U},{\mathbf{3}}}(\Phi_H)) = \int_{\Omega} f \Phi_H \quad \mbox{for all } \Phi_H \in V_H.
\end{align*}
\end{remark}

In the spirit of homogenization theory, one might pose the question if Theorem \ref{gloria-theorem} still holds for MsFEM approximations obtained with Oversampling Strategy 3. At least, this seems to be likely. The reason is that Theorem \ref{gloria-theorem} in particular covers the case without oversampling (see also \cite{Gloria:2006}) and the proof given in \cite{Gloria:2008} goes back to the arguments used for the case without oversampling. But for $\mathcal{U}=\T_H$ (no oversampling), strategies 1 and 2 are identical, and strategy 3 is at least close to the classical approach.
Especially concerning Strategy 3: if the thickness of the oversampling layer decreases faster than the coarse mesh size, we are almost in the case of Oversampling Strategy 1, up to a small perturbation of the source term that is of order $\max_{T\in \T_H} \frac{|U(T)\setminus T|}{|T|}$ and that converges to zero under the assumptions of Theorem \ref{gloria-theorem}. However, such arguments still need a detailed investigation. In this sense, one might carefully study if Strategy 3 also covers the 
homogenization setting established by Gloria, with $u_H^{\mathbf{3}}$ converging to the homogenized solution as in Theorem \ref{gloria-theorem}. This might be an interesting result to ensure that Strategy 3 is not worse than the classical strategies with respect to a homogenization setting.

$\\$
Besides the advantages of our new strategy mentioned previously, e.g., its conformity, stability, unique solvability, we formulate the main error estimate which is proved in  Subsection \ref{subsection-proof-main-result}.

\begin{theorem}[Quantitative a-priori error estimates]
\label{main-theorem}Assume that $A\in L^\infty(\Omega,\mathbb{R}^{d\times d}_{sym})$ and $f \in L^2(\Omega)$ as in the general assumptions in Section \ref{subsection-setting}. Let $\T_H$ be given coarse triangulation and let $\mathcal{U}$ denote a corresponding set of admissible patches, with the property $\dmin \gtrsim H \log(H^{-1})$. By $\T_h$ we denote a sufficiently accurate fine triangulation of $\Omega$ and by $u_h$ the associated finite element solution of (\ref{e:modelref}). If $u^{\mathbf{3},\mbox{\rm \tiny MsFEM}}_H$ is the MsFEM approximation determined with Oversampling Strategy 3 and if $u_H^{\mathbf{3}}$ denotes the corresponding coarse part, then the following a-priori error estimates holds true for arbitrary mesh sizes $H\geq h$:
\begin{align*}
 \| \nabla u_h - \nabla u^{\mathbf{3},\mbox{\rm \tiny MsFEM}}_H\|_{L^2(\Omega)} &\le C H,\\
 \| u_h - u^{\mathbf{3},\mbox{\rm \tiny MsFEM}}_H\|_{L^2(\Omega)} &\le C H^2,\\
 \| u_h - u_H^{\mathbf{3}}\|_{L^2(\Omega)} &\le C H.
\end{align*}
Here, $C$ denotes a generic constant that depends on $f$, $\gmin$ and $\gmax$ but {\rm not} on $H$, $h$, the regularity of the exact solution or the variations of $A$. Details on the constants are given in Theorems \ref{t:H1} and \ref{t:L2}.
\end{theorem}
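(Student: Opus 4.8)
\emph{Proof strategy.} The plan is to reinterpret Oversampling Strategy 3 as a localization of an \emph{ideal} Galerkin method posed on a multiscale space generated by globally computed correctors, and then to run a Localized Orthogonal Decomposition / Variational Multiscale error analysis.

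\textbf{Step 1: well-posedness and quasi-optimality.} Since $w_{h,T,i}^{\mathcal{U},\mathbf{3}}\in\oVf(U(T))\subset\Vf=\ker(\Ic|_{V_h})$, the global corrector satisfies $\Ic(Q_h^{\mathcal{U},\mathbf{3}}(\Phi_H))=0$, hence $\Ic(\Phi_H+Q_h^{\mathcal{U},\mathbf{3}}(\Phi_H))=\Phi_H$; combined with the stability part of \eqref{e:interr} this gives $\|\nabla\Phi_H\|_{L^2(\Omega)}\lesssim\|\nabla(\Phi_H+Q_h^{\mathcal{U},\mathbf{3}}(\Phi_H))\|_{L^2(\Omega)}$, so $\mathcal{A}^{\mathbf{3}}$ is coercive on $V_H$ with constant $\sim\gmin$ (boundedness being immediate). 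Lax--Milgram then yields existence, uniqueness and the stability bound $\|\nabla u_H^{\mathbf{3}}\|_{L^2(\Omega)}\lesssim\|f\|_{L^2(\Omega)}$. Setting $\tilde V_H:=(\operatorname{id}+Q_h^{\mathcal{U},\mathbf{3}})(V_H)\subset V_h$, the right-hand side of \eqref{e:MsVMMsym} is exactly $F$ tested against $\tilde V_H$, so $u^{\mathbf{3},\MsFEM}_H\in\tilde V_H$ is the $a(\cdot,\cdot)$-Galerkin projection of $u_h$ onto $\tilde V_H$, and Céa's lemma gives
\[
\|\nabla(u_h-u^{\mathbf{3},\MsFEM}_H)\|_{L^2(\Omega)}\lesssim\inf_{\tilde v\in\tilde V_H}\|\nabla(u_h-\tilde v)\|_{L^2(\Omega)}\leq\|\nabla(u_h-(\operatorname{id}+Q_h^{\mathcal{U},\mathbf{3}})(\Ic u_h))\|_{L^2(\Omega)}.
\]

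\textbf{Step 2: the ideal method.} Next I would introduce the globally computed element correctors $\tilde w_{T,i}\in\Vf$ solving $\int_\Omega A\nabla\tilde w_{T,i}\cdot\nabla\phi_h=-\int_T Ae_i\cdot\nabla\phi_h$ for all $\phi_h\in\Vf$, and $\tilde Q(\Phi_H):=\sum_{T}\sum_i\partial_{x_i}\Phi_H(x_T)\tilde w_{T,i}$. A short computation shows $a((\operatorname{id}+\tilde Q)\Phi_H,\phi_h)=0$ for all $\phi_h\in\Vf$, so $V_h=(\operatorname{id}+\tilde Q)(V_H)\oplus\Vf$ is an $a$-orthogonal direct sum (directness follows by applying $\Ic$). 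Decomposing $u_h$ accordingly and applying $\Ic$ identifies the coarse component as $\Ic u_h$, so $e:=u_h-(\operatorname{id}+\tilde Q)(\Ic u_h)\in\Vf$ solves $a(e,\phi_h)=F(\phi_h)$ for all $\phi_h\in\Vf$; testing with $\phi_h=e$ and using $\|e\|_{L^2(\Omega)}\lesssim H\|\nabla e\|_{L^2(\Omega)}$ (again \eqref{e:interr}, since $\Ic e=0$) yields $\|\nabla e\|_{L^2(\Omega)}\lesssim H\|f\|_{L^2(\Omega)}$. Choosing $\tilde v=(\operatorname{id}+Q_h^{\mathcal{U},\mathbf{3}})(\Ic u_h)$ in Step 1 and a triangle inequality reduce the first estimate to
\[
\|\nabla(u_h-u^{\mathbf{3},\MsFEM}_H)\|_{L^2(\Omega)}\lesssim H\|f\|_{L^2(\Omega)}+\|\nabla(\tilde Q-Q_h^{\mathcal{U},\mathbf{3}})(\Ic u_h)\|_{L^2(\Omega)}.
\]

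\textbf{Step 3: localization error and the remaining estimates.} The crux is the localization term. I would first prove the exponential decay of the ideal element correctors away from $T$ (this is Lemma~\ref{l:decay}), using cut-off functions supported in shells of width $\sim H$ around $T$ and an iterated Caccioppoli/inverse-estimate argument on $\T_h$, giving a decay rate $\theta=\theta(\gmax/\gmin,\text{shape regularity})\in(0,1)$ independent of $h$, $H$ and $A$. A Galerkin/best-approximation argument in $\oVf(U(T))$ transfers this to $\|\nabla(\tilde w_{T,i}-w_{h,T,i}^{\mathcal{U},\mathbf{3}})\|_{L^2(\Omega)}\lesssim\theta^{m}\|\nabla\tilde w_{T,i}\|_{L^2(\Omega)}$ with $m\sim\dmin/H$ the number of oversampling layers. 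Summing the squared local contributions over $T$ — where $|\partial_{x_i}\Phi_H(x_T)|\lesssim H^{-d/2}\|\nabla\Phi_H\|_{L^2(T)}$, $\|\nabla\tilde w_{T,i}\|_{L^2(\Omega)}\lesssim H^{d/2}$ (test the corrector equation with itself), and the finite overlap (of cardinality $\lesssim m^d$) together with the decay of the differences controls the summation — yields $\|\nabla(\tilde Q-Q_h^{\mathcal{U},\mathbf{3}})(\Phi_H)\|_{L^2(\Omega)}\lesssim m^{d/2}\theta^{m}\|\nabla\Phi_H\|_{L^2(\Omega)}$. The hypothesis $\dmin\gtrsim H\log(H^{-1})$ with sufficiently large hidden constant makes $m^{d/2}\theta^{m}\lesssim H$, so this term is $\lesssim H\|\nabla\Ic u_h\|_{L^2(\Omega)}\lesssim H\|f\|_{L^2(\Omega)}$, completing the first estimate. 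The $L^2$-estimate then follows by Aubin--Nitsche: since $u_h$ and $u^{\mathbf{3},\MsFEM}_H$ both lie in the conforming space $V_h$, the duality argument contains no nonconforming consistency terms, the dual problem has the same structure, and its localized solution is again $O(H)$-accurate in the energy norm, so the product of the two energy errors gives $\|u_h-u^{\mathbf{3},\MsFEM}_H\|_{L^2(\Omega)}\lesssim H^2\|f\|_{L^2(\Omega)}$. Finally, $\|u_h-u_H^{\mathbf{3}}\|_{L^2(\Omega)}\leq\|u_h-u^{\mathbf{3},\MsFEM}_H\|_{L^2(\Omega)}+\|Q_h^{\mathcal{U},\mathbf{3}}(u_H^{\mathbf{3}})\|_{L^2(\Omega)}\lesssim H^2\|f\|_{L^2(\Omega)}+H\|\nabla Q_h^{\mathcal{U},\mathbf{3}}(u_H^{\mathbf{3}})\|_{L^2(\Omega)}\lesssim H\|f\|_{L^2(\Omega)}$, using $\Ic Q_h^{\mathcal{U},\mathbf{3}}(u_H^{\mathbf{3}})=0$, \eqref{e:interr}, and $\|\nabla Q_h^{\mathcal{U},\mathbf{3}}(u_H^{\mathbf{3}})\|_{L^2(\Omega)}\lesssim\|\nabla u_H^{\mathbf{3}}\|_{L^2(\Omega)}\lesssim\|f\|_{L^2(\Omega)}$.

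\textbf{Main obstacle.} Everything after the decay estimate is routine and relies only on the interpolation bound \eqref{e:interr} and ellipticity of $A$. The real work is Lemma~\ref{l:decay}: obtaining a decay rate $\theta<1$ that depends only on the contrast $\gmax/\gmin$ and the shape regularity — not on $h$, $H$ or the variations of $A$ — out of the cut-off/inverse-estimate iteration, and then tracking the patch overlaps carefully enough that $\dmin\sim H\log(H^{-1})$ is precisely the threshold at which the accumulated truncation error is absorbed into the target rate $O(H)$.
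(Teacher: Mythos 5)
Your proposal follows essentially the same route as the paper's own proof: the ideal globally supported correctors and the $a$-orthogonal decomposition of Lemma~\ref{l:maximalos}, the exponential decay of Lemma~\ref{l:decay} transferred to the truncation error via a Galerkin best-approximation argument (Corollary~\ref{c:truncation}), the $k^{d/2}$ patch-overlap counting of Lemma~\ref{lemma-influence-intersections}, the choice $k\gtrsim\log(H^{-1})$ to absorb $k^{d/2}e^{-rk}$ into $O(H)$, and the Aubin--Nitsche duality plus $L^2$-stability of $\Ic$ for Theorem~\ref{t:L2}. The only cosmetic slip is that the Cl\'ement-type operator $\Ic$ is invertible on $V_H$ but not the identity there, so $\Ic(\Phi_H+Q_h^{\mathcal{U},\mathbf{3}}(\Phi_H))=\Ic\Phi_H$ rather than $\Phi_H$; your coercivity argument and the identification of the coarse component survive unchanged once the boundedness of $(\Ic\vert_{V_H})^{-1}$ is invoked.
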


\subsection{Proof of the main result}
\label{subsection-proof-main-result}

Before we prove the error estimates for the MsFEM with the correctors presented in Oversampling Strategy 3, we introduce some simplifying notations for this subsection.

\begin{definition}[Notations for Oversampling Strategy 3]
\label{notations-strat-3}
Let $w_{h,T,i}^{\mathcal{U},\mathbf{3}} \in \oVf(U(T))$ denote the local corrector basis given by (\ref{e:correctorlocal}), let $Q_h^{\mathcal{U},{\mathbf{3}}}$ denote the corresponding corrector operator from Strategy 3 and let $u_H^{\mathbf{3}}$ denote the arising (coarse) MsFEM approximation. In the following, we skip the redundant indices and use the following notation:
\begin{align*}
 w_T^i := w_{h,T,i}^{\mathcal{U},\mathbf{3}}, \quad \Qh:=Q_h^{\mathcal{U},{\mathbf{3}}}, \quad u_H:= u_H^{\mathbf{3}} \quad \mbox{and} \quad u^{\mbox{\tiny MsFEM}}:=u_H + \Qh(u_H).
\end{align*}
\end{definition}
The first lemma treats the (unpractical) case of maximal oversampling.
\begin{lemma}[Error estimate for maximal oversampling]\label{l:maximalos}
Let $U(T)=\Omega$ for all $T\in\T_H$. Then the multiscale approximation $u_H$ that solves \eqref{e:MsVMMsym} satisfies the error estimate
\begin{equation*}
 \|\nabla u_h-\nabla(u_H+\Qh(u_H))\|_{L^2(\Omega)}\lesssim \gmin^{-1}\|H f\|_{L^2(\Omega)},
\end{equation*}
where $u_h$ solves the reference problem \eqref{e:modelref}.

If, moreover, $(f,w_h)_{L^2(\Omega)}=0$ for all fine scale functions $w_h\in \Vf$, then $u_H+\Qh(u_H)=u_h$.
\end{lemma}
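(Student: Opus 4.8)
My plan is to exploit that, for $U(T)=\Omega$, the constrained correctors make the corrected coarse space an energy-orthogonal complement of $\Vf$ inside $V_h$, so that $u_H+\Qh(u_H)$ is exactly the Galerkin projection of the reference solution $u_h$ onto a multiscale space; the stated bound then follows from a short interpolation estimate. First I would observe that for $U(T)=\Omega$ the side condition in \eqref{localspacedefos3} is vacuous (every $v\in\Vf$ already vanishes on $\partial\Omega$), so $\oVf(U(T))=\Vf$ and each $w_T^i$ solves \eqref{e:correctorlocal} with test space $\Vf$. Then, since $\nabla\Phi_H$ is $\T_H$-piecewise constant with $\nabla\Phi_H|_T=\sum_i\partial_{x_i}\Phi_H(x_T)e_i$, summing \eqref{e:correctorlocal} over $i$ and $T$ with the weights $\partial_{x_i}\Phi_H(x_T)$ gives, for all $\Phi_H\in V_H$ and all $\phi_h\in\Vf$,
\begin{equation*}
 a(\Phi_H+\Qh(\Phi_H),\phi_h)=\sum_{T\in\T_H}\int_T A\nabla\Phi_H\cdot\nabla\phi_h+\int_\Omega A\nabla\Qh(\Phi_H)\cdot\nabla\phi_h=0 .
\end{equation*}
Hence the multiscale space $V_H^{\mathrm{ms}}:=\{\Phi_H+\Qh(\Phi_H)\mid\Phi_H\in V_H\}$ is $a$-orthogonal to $\Vf$; and since $\Qh(\Phi_H)\in\Vf$ forces $\Ic(\Phi_H+\Qh(\Phi_H))=\Phi_H$, the map $\Phi_H\mapsto\Phi_H+\Qh(\Phi_H)$ is injective, so $\dim V_H^{\mathrm{ms}}=\dim V_H$ and, using the stable splitting \eqref{e:decomposition}, $V_h=V_H^{\mathrm{ms}}\oplus\Vf$ is an energy-orthogonal direct sum.

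Next I would identify the method as a projection: as $\Phi_H$ runs over $V_H$, $\Phi_H+\Qh(\Phi_H)$ runs over $V_H^{\mathrm{ms}}$, so \eqref{e:MsVMMsym} is equivalent to $a(u_H+\Qh(u_H),v)=F(v)$ for all $v\in V_H^{\mathrm{ms}}$. Since $u_h$ solves \eqref{e:modelref} and $V_H^{\mathrm{ms}}\subset V_h$, the error $e:=u_h-(u_H+\Qh(u_H))$ is $a$-orthogonal to $V_H^{\mathrm{ms}}$, hence $e\in\Vf$ by the orthogonal decomposition, and in particular $\Ic e=0$. Then, using $a(u_H+\Qh(u_H),e)=0$ (energy orthogonality) and $a(u_h,e)=F(e)$,
\begin{equation*}
 \gmin\|\nabla e\|_{L^2(\Omega)}^2\le a(e,e)=F(e)=\int_\Omega f\,e=\int_\Omega f\,(e-\Ic e),
\end{equation*}
and the local interpolation estimate \eqref{e:interr}, applied elementwise together with a Cauchy--Schwarz inequality (keeping the factor $H_K$ inside the sum) and the finite overlap of the patches $\omega_K$, bounds the last integral by $\lesssim\|Hf\|_{L^2(\Omega)}\|\nabla e\|_{L^2(\Omega)}$. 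Dividing by $\|\nabla e\|_{L^2(\Omega)}$ yields $\|\nabla e\|_{L^2(\Omega)}\lesssim\gmin^{-1}\|Hf\|_{L^2(\Omega)}$, which is the claim. For the addendum, if $(f,w_h)_{L^2(\Omega)}=0$ for all $w_h\in\Vf$, then $e\in\Vf$ makes the right-hand side of the displayed chain vanish, so $\nabla e=0$; as $e\in H^1_0(\Omega)$ this gives $e=0$, i.e. $u_H+\Qh(u_H)=u_h$.

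I do not expect a serious obstacle. The one idea the whole argument hinges on is the energy-orthogonality identity above, which in turn relies on the right-hand side of \eqref{e:correctorlocal} being supported on $T$ only (not on $U(T)$): this is exactly what makes the $\partial_{x_i}\Phi_H(x_T)$-weighted sum of the local corrector equations collapse into $-a(\Phi_H,\cdot)$ over all of $\Omega$, and thereby identifies the method with an ideal (but non-local) projection. The only routine care needed is the elementwise bookkeeping in the interpolation step, ensuring that the mesh-size weight stays attached to $f$ before Cauchy--Schwarz so that $\|Hf\|_{L^2(\Omega)}$ (and not $\|f\|_{L^2(\Omega)}$) appears in the final estimate.
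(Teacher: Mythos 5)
Your proposal is correct and follows essentially the same route as the paper: the weighted sum of the corrector equations yields $a(\Phi_H+\Qh(\Phi_H),\phi_h)=0$ for $\phi_h\in\Vf$, giving the $a$-orthogonal splitting $V_h=V_H^{\mathrm{ms}}\oplus_{\perp_a}\Vf$, whence the error lies in $\Vf$, has vanishing Cl\'ement interpolant, and is bounded via \eqref{e:interr}. One cosmetic slip: $\Ic(\Phi_H+\Qh(\Phi_H))=\Ic\Phi_H$, not $\Phi_H$ (the Cl\'ement operator is invertible on $V_H$ but not the identity there), though injectivity of $\Phi_H\mapsto\Phi_H+\Qh(\Phi_H)$ still follows.
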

\begin{proof}
For $U(T)=\Omega$, $\Qh$ maps onto the fine scale space $W_h$. Given $\Phi_H\in V_H$, it is easily checked that $\Qh(\Phi_H) = \sum_{T \in \T_H} \sum_{i=1}^d \partial_{x_i}\Phi_H(x_T) w_T^i$ satisfies
\begin{align*}
a(\Qh(\Phi_H) , \phi_h) &= \int_{\Omega} A \nabla \left( \sum_{T \in \T_H} \sum_{i=1}^d \partial_{x_i}\Phi_H(x_T) w_T^i(x) \right) \cdot \nabla \phi_h \\
&= \int_{\Omega} A \left( \sum_{T \in \T_H} \sum_{i=1}^d \partial_{x_i}\Phi_H(x_T) \nabla w_T^i(x) \right) \cdot \nabla \phi_h \\
&= \sum_{T \in \T_H} \sum_{i=1}^d \partial_{x_i}\Phi_H(x_T) \int_{\Omega} A \nabla w_T^i(x) \cdot \nabla \phi_h \\
&= - \sum_{T \in \T_H} \sum_{i=1}^d \partial_{x_i}\Phi_H(x_T) \int_{T} A e_i \cdot \nabla \phi_h \\
&= - \sum_{T \in \T_H} \int_{T} A \nabla \Phi_H \cdot \nabla \phi_h \\
&= - a(\Phi_H ,\phi_h)
\end{align*}
for all $\phi_h \in \Vf$. This means that $\Qh$ is the orthogonal projection of $\Phi_H$ onto the fine scale space $\Vf$ with respect to the scalar product $a( \cdot , \cdot )$. This yields the orthogonal decomposition
\begin{align}\label{e:odecompostion}
V_h = \tilde{V}_H \oplus_{\perp_a} W_h , \;\mbox{where} \enspace \tilde{V}_H:= \{ \Phi_H + \Qh(\Phi_H)\;|\; \Phi_H \in V_H  \}.
\end{align}
Moreover, it holds Galerkin orthogonality, i.e., for $e_h:=u_h-(u_H+\Qh(u_H))$ and for arbitrary $\Phi_H + \Qh(\Phi_H) \in \tilde{V}_H$,
\begin{align}\label{e:galerkinortho}
\nonumber a( e_h, \Phi_H + \Qh(\Phi_H))&=a( u_h, \Phi_H + \Qh(\Phi_H)) - a( u_H+\Qh(u_H), \Phi_H + \Qh(\Phi_H)) \\
&\overset{\eqref{e:MsVMMsym}}{=} 0.
\end{align}
The combination of \eqref{e:odecompostion} and \eqref{e:galerkinortho} shows that $e_h \in \Vf$ and therefore $\Ic(e_h)=0$. We obtain
\begin{align*}
\gmin\|\nabla e_h \|_{L^2(\Omega)}^2 &\leq a(e_h,e_h) = a(u_h,e_h)= \int_{\Omega} f e_h = \int_{\Omega} f (e_h-\Ic(e_h)).
\end{align*}
The application of the Cauchy-Schwarz inequality on the element level and the estimate \eqref{e:interr} for the interpolation error yield the  assertion.
\end{proof}

\begin{corollary}
The new MsFEM is exact (up to the discretization error on the fine scale and oscillations $\|H f\|_{L^2(\Omega)}$ of the right-hand side $f$) in the limit of maximal oversampling. This results holds true independent of the upper spectral bound $\gmax$ and the variations of $A$. This is the next difference to the previous Strategies 1 and 2.
\end{corollary}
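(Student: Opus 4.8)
The plan is to obtain this essentially for free from Lemma~\ref{l:maximalos}, since the Corollary merely re-interprets that error estimate. First I would match the two assertions of the Corollary with the two parts of the lemma: the statement that the method is \emph{exact up to} the fine-scale discretization error and the right-hand side oscillations is precisely the bound $\|\nabla u_h-\nabla(u_H+\Qh(u_H))\|_{L^2(\Omega)}\lesssim\gmin^{-1}\|Hf\|_{L^2(\Omega)}$, the fine-scale discretization error being hidden in the difference between the reference solution $u_h$ of \eqref{e:modelref} and the exact solution $u$; while the claim of genuine exactness, $u_H+\Qh(u_H)=u_h$, is the second conclusion of Lemma~\ref{l:maximalos}, valid whenever $f$ is $L^2$-orthogonal to $\Vf$.

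The substantive point to verify is the independence of $\gmax$ and of the variations of $A$. For this I would re-inspect the proof of Lemma~\ref{l:maximalos} and track where $A$ enters: the $a$-orthogonal splitting \eqref{e:odecompostion} and the Galerkin orthogonality \eqref{e:galerkinortho} are purely algebraic and use only that $a(\cdot,\cdot)$ is a symmetric bilinear form together with the orthogonal-projection property of $\Qh$ onto $\Vf$; the conclusion $e_h\in\Vf$, hence $\Ic(e_h)=0$, uses only the resulting direct-sum structure; and the final chain $\gmin\|\nabla e_h\|_{L^2(\Omega)}^2\le a(e_h,e_h)=\int_\Omega f\,(e_h-\Ic e_h)$, followed by an elementwise Cauchy--Schwarz inequality and the Cl\'ement estimate \eqref{e:interr}, introduces, besides the factor $\gmin^{-1}$, only the interpolation constant, which depends on the shape regularity of $\T_H$ alone. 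Hence $\gmax$ never appears, and no norm of $A$ or of $\nabla A$ is ever invoked, which establishes the asserted robustness.

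Finally, to justify the closing sentence that this is a genuine difference to Strategies~1 and~2, I would contrast it with the behaviour recorded earlier: for those strategies, even with maximal oversampling $U(T)=\Omega$, the estimates of Theorem~\ref{Hou:Wu:Zhang:2004-theorem} (and the Remark following it) retain an irreducible $O(\varepsilon)$ contribution stemming from the mismatch between $u^\varepsilon$ and its homogenized limit, so exactness at maximal oversampling simply cannot hold there; likewise, the $\gmax$-dependence disappears here only because the global problem \eqref{e:MsVMMsym} is posed in the $a$-orthogonal decomposition rather than with a Petrov--Galerkin pairing. I do not expect any real obstacle in this argument: the only care needed is the bookkeeping of the constants through the proof of Lemma~\ref{l:maximalos} to confirm that none of them secretly depends on $\gmax$ or on the oscillation of $A$.
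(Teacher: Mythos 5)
Your proposal is correct and follows exactly the route the paper intends: the Corollary is stated as an immediate consequence of Lemma~\ref{l:maximalos}, and your reading of its proof correctly confirms that the only constants entering are $\gmin^{-1}$ and the shape-regularity-dependent Cl\'ement constant from \eqref{e:interr}, so neither $\gmax$ nor the variations of $A$ appear. The identification of ``exactness up to fine-scale discretization error and $\|Hf\|_{L^2(\Omega)}$'' with the two conclusions of the lemma is precisely the intended content.
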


Although the error estimate in Lemma~\ref{l:maximalos} is encouraging, maximal oversampling is not feasible. We shall 
study the decay of the correctors away from element they are associated with.
For all $T\in\T_H $, define element patches in the coarse mesh $\T_H$ by
  \begin{equation}\label{def-patch-U-k}
    \begin{aligned}
      U_0(T) & := T, \\
      U_k(T) & := \cup\{T'\in \T_H\;\vert\; T'\cap U_{k-1}(T)\neq\emptyset\}\quad k=1,2,\ldots .
    \end{aligned}
  \end{equation}

\begin{lemma}[Decay of the ideal correctors]\label{l:decay}
Let $U(T)=\Omega$ for all $T\in\T_H$ in Oversampling Strategy 3
and let $w^i_T$ denote the corresponding local correctors defined in
Definition \ref{notations-strat-3} (and equation (\ref{e:correctorlocal})). Then, for all $T\in\T_H$ and all $k\in\mathbb{N}$,
$$\|A^{1/2}\nabla w^i_T\|_{L^2(\Omega\setminus U_k(T))}\lesssim e^{-r k}\|A^{1/2}\nabla w^i_T\|_{L^2(\Omega)},$$
where $r$ is a positive constant that depends on the square root of the contrast but not on the mesh size or the variations of $A$.
\end{lemma}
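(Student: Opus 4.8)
The plan is to establish the exponential decay by a standard Caccioppoli-type iteration combined with the local approximation/stability property \eqref{e:interr} of the Cl\'ement interpolation. The starting point is the defining equation \eqref{e:correctorlocal} with $U(T)=\Omega$: $w^i_T\in W_h$ satisfies $\int_\Omega A\nabla w^i_T\cdot\nabla\phi_h=-\int_T Ae_i\cdot\nabla\phi_h$ for all $\phi_h\in W_h$. The key observation is that the right-hand side is supported on the single element $T$, so for any test function $\phi_h\in W_h$ that vanishes on $T$ (more precisely on a patch $U_1(T)$ around $T$) the equation becomes $\int_\Omega A\nabla w^i_T\cdot\nabla\phi_h=0$. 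This gives us an ``$A$-harmonicity'' of $w^i_T$ in $W_h$ away from $T$, which is exactly the mechanism driving the decay.

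First I would fix $k\geq 1$ and introduce a Lipschitz cutoff function $\eta=\eta_k\in P_1(\T_H)$ (nodal, piecewise affine on the coarse mesh) with $\eta\equiv 0$ on $U_{k-1}(T)$, $\eta\equiv 1$ on $\Omega\setminus U_k(T)$, and $\|\nabla\eta\|_{L^\infty}\lesssim H^{-1}$. The natural test function would be $\phi_h:=\Ic(\eta w^i_T)$ — or rather, one writes $\eta w^i_T = \Ic(\eta w^i_T) + (1-\Ic)(\eta w^i_T)$, where the first term is an admissible test function in $V_h$ but we need it in $W_h=\ker\Ic$. The clean route, which matches the ``$W_h$'' structure, is: set $z_h:=(1-\Ic)(\eta w^i_T)\in W_h$, use it as test function so that $\int_\Omega A\nabla w^i_T\cdot\nabla z_h=0$ provided $\operatorname{supp}z_h\cap T=\emptyset$, which holds because $\eta$ and hence $\eta w^i_T$ vanishes on $U_{k-1}(T)\supset U_1(T)$ and the Cl\'ement operator has local support. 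From $0=\int_\Omega A\nabla w^i_T\cdot\nabla((1-\Ic)(\eta w^i_T))$ one rewrites $\int_\Omega A\nabla w^i_T\cdot\nabla(\eta w^i_T)=\int_\Omega A\nabla w^i_T\cdot\nabla(\Ic(\eta w^i_T))$, expand $\nabla(\eta w^i_T)=\eta\nabla w^i_T + w^i_T\nabla\eta$ on the left, and estimate the right-hand side with \eqref{e:interr} by $\|\nabla w^i_T\|$ times gradients of $\eta w^i_T$ localized to the annular region $R_k:=U_k(T)\setminus U_{k-1}(T)$ where $\nabla\eta\neq 0$.

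The core of the argument is then the resulting Caccioppoli inequality: letting $b_k:=\|A^{1/2}\nabla w^i_T\|^2_{L^2(\Omega\setminus U_k(T))}$, one derives $b_k\lesssim (b_{k-1}-b_k)$, i.e. $b_k\le \frac{C}{C+1}\,b_{k-1}$ for a contrast-dependent constant $C$, using that on the annulus the term $w^i_T\nabla\eta$ is controlled by $H^{-1}\|w^i_T\|_{L^2(R_k)}$ which in turn is bounded by $\|\nabla w^i_T\|_{L^2(\tilde R_k)}$ on a slightly enlarged annulus via the Poincar\'e-type bound hidden in \eqref{e:interr} (here one uses $\Ic w^i_T=0$, so $w^i_T=w^i_T-\Ic w^i_T$). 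Iterating $k$ times from $b_0=\|A^{1/2}\nabla w^i_T\|^2_{L^2(\Omega)}$ gives $b_k\lesssim \theta^k b_0$ with $\theta=C/(C+1)<1$, hence the claimed bound with $r=-\tfrac12\log\theta>0$, and $r$ depends only on $C$ which scales like $\sqrt{\gmax/\gmin}$.

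I expect the main obstacle to be the bookkeeping of the overlapping patches and the Cl\'ement operator's one-layer enlargement of supports: each application of $\Ic$ or of \eqref{e:interr} couples $R_k$ to $\omega_{R_k}\subset R_{k+1}\cup R_{k-1}$, so one must be careful that the ``good'' region (where $\eta$ is constant, hence where the energy is genuinely absorbed) still overlaps $\Omega\setminus U_k(T)$ after these one-ring fattenings; this is why the statement is phrased with $U_k$ rather than metric balls and is also morally why the hypothesis $\dmin\gtrsim H\log(H^{-1})$ appears in the main theorem (one needs $\gtrsim\log(H^{-1})$ layers for the geometric factor $\theta^k$ to beat polynomial powers of $H$). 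A secondary technical point is handling the $A^{1/2}$-weighted norms cleanly so that the final constant $r$ depends only on the square root of the contrast and not on the variations of $A$; this is achieved by never integrating $A$ against itself and only using the spectral bounds \eqref{e:spectralbound} to pass between $\|A^{1/2}\nabla\cdot\|$ and $\|\nabla\cdot\|$ at the very end of each step.
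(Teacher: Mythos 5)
Your overall strategy---exploit that $w^i_T$ is $a$-orthogonal to every admissible fine-scale test function whose gradient vanishes on $T$, test with a cut-off multiple of $w^i_T$ itself, absorb the $w^i_T\nabla\eta$ term through the interpolation estimate \eqref{e:interr} using $\Ic w^i_T=0$, and iterate the resulting Caccioppoli inequality---is exactly the paper's. The only structural difference is cosmetic: the paper ramps its cut-off over $\ell$ coarse layers and then fixes $\ell:=\lceil C_1 e\rceil$ so that each step contracts by $e^{-1}$, whereas you use a one-layer cut-off and the self-absorption inequality $b_k\lesssim b_{k-1}-b_k$; both routes deliver the same exponential rate with $r$ proportional to the inverse square root of the contrast.

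The one step that fails as written is the claim that $z_h:=(1-\Ic)(\eta w^i_T)\in \Vf$. First, $\eta w^i_T$ is $\T_h$-piecewise \emph{quadratic}, hence not a finite element function, so you must interpose the fine nodal interpolant $I_h$ (a harmless fix, costing the first term in \eqref{e:intesth}). Second, and more seriously, the Cl\'ement operator \eqref{e:clement} is not idempotent --- it is invertible on $V_H$ but not the identity there --- so $1-\Ic$ does not map into $\ker\Ic=\Vf$, and your $z_h$ is not an admissible test function for \eqref{e:correctorlocal}. The repair is not simply to postcompose with $(\Ic|_{V_H})^{-1}$: the correction $v$ with $\Ic v=\Ic I_h(\eta w^i_T)$ must in addition be \emph{supported away from $T$} and have controlled energy, since otherwise the orthogonality $a(w^i_T,\tilde w)=0$ --- the engine of the whole argument --- is destroyed. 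Supplying such a localized, energy-stable correction is precisely the content of the paper's Lemma~\ref{l:cutoff}, which rests on \cite[Lemma 1]{2011arXiv1110.0692M}; this auxiliary construction is the ingredient missing from your proposal. The remaining concerns you raise (one-ring fattening of supports, bookkeeping of the annuli) are real but only shift indices by a bounded amount and do not affect the conclusion.
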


The proof of Lemma~\ref{l:decay} requires the definition of cut-off functions and an additional lemma. For $T\in\T_H$ and $\ell,k\in\mathbb{N}$ with $k > \ell$, define $\eta_{T,k,\ell}\in P_1(\T_H)$ with nodal values
\begin{equation}\label{e:cutoffH}
\begin{aligned}
 \eta_{T,k,\ell}(z) &= 0\quad\text{for all }z\in\mathcal{N}\cap U_{k-\ell}(T),\\
 \eta_{T,k,\ell}(z) &= 1\quad\text{for all }z\in\mathcal{N}\cap \left(\Omega\setminus U_k(T)\right),\text{ and}\\
 \eta_{T,k,\ell} (z)&= \frac{m}{\ell}\quad\text{for all }x\in\mathcal{N}\cap \partial U_{k-\ell+m}(T),\;m=0,1,2,\ldots,\ell.
\end{aligned}
\end{equation}
For a sketch in $1d$, see Figure \ref{eta-figure}.

\begin{figure}[t]
\centering
\includegraphics[width=0.40\textwidth,height=0.40\textwidth]{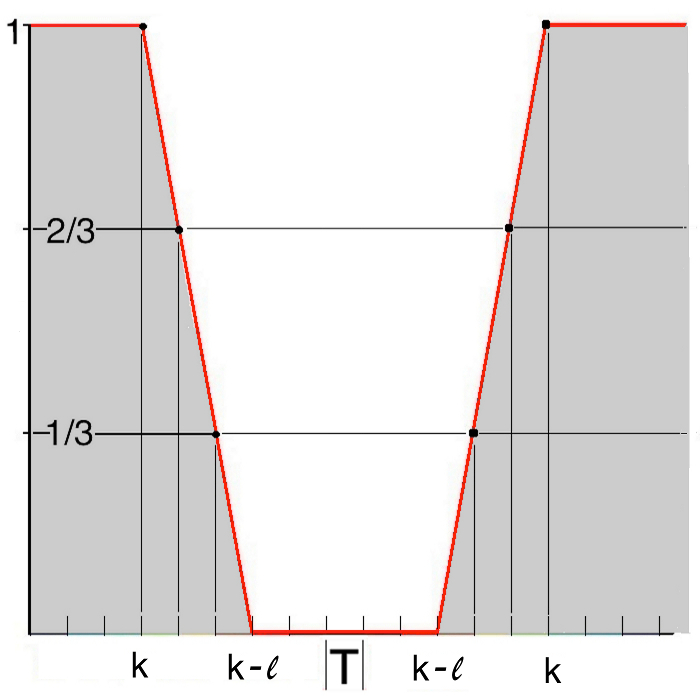}
\caption{Sketch of $\eta_{T,k,\ell}$ (red curve) in $1d$ for $k=5$ and $\ell=3$. $\eta_{T,k,\ell}$ is equal to zero on $T$ and also on the first 2 ($=k-\ell$) coarse grid layers around $T$. Then in grows linearly on the layers $\ell=3$ till $k=5$. On the remaining layers $\eta_{T,k,\ell}$ is constantly to $1$.}
\label{eta-figure}
\end{figure}

Given some $w\in \Vf$, the product $\eta_{T,k,\ell}w$ is not in $\Vf$ in general. However, the distance of $\eta_{T,k,\ell}w$ and $\Vf$ is small in the following sense. 
\begin{lemma}\label{l:cutoff}
Given $w\in \Vf$ and some cutoff function $\eta_{T,k,\ell}\in P_1(\T_H)$ as in \eqref{e:cutoffH}, there exists some $\tilde{w}\in \mathring{W}_h(\Omega \setminus U_{k-\ell-1}(T)) \subset W_h$  such that
\begin{equation*}
\|\nabla(\eta_{T,k,\ell}w-\tilde{w})\|_{L^2(\Omega)}\lesssim  \ell^{-1} \|\nabla w\|_{L^2(U_{k+2}(T)\setminus U_{k-\ell-2}(T))}.
\end{equation*}
\end{lemma}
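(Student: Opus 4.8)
\emph{Proof strategy.} Abbreviate $\eta:=\eta_{T,k,\ell}$ and let $I_h\colon C^0(\overline\Omega)\to V_h$ denote nodal interpolation onto the fine mesh. The product $\eta w$ is continuous and $\T_h$-piecewise quadratic but not in $V_h$, so the natural first step is to replace it by $I_h(\eta w)\in V_h$ and then repair the two defects of $I_h(\eta w)$: it is not in $\Vf=\ker\Ic$, and it is not supported in $\Omega\setminus U_{k-\ell-1}(T)$. Because $\eta$ vanishes on every coarse element contained in $U_{k-\ell}(T)$, we have $I_h(\eta w)\equiv 0$ on $U_{k-\ell}(T)$; since the nodal patch $\omega_z$ of each coarse node $z\in U_{k-\ell-1}(T)$ is contained in $U_{k-\ell}(T)$, the coarse function $g_H:=\Ic(I_h(\eta w))\in V_H$ vanishes on $U_{k-\ell-1}(T)$ as well; and because $\eta\equiv 1$ and hence $I_h(\eta w)=w$ a fixed number of coarse layers outside $U_k(T)$, the identity $\Ic w=0$ forces $g_H$ to vanish there too. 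Thus $g_H$ is supported (as a function) in a collar $C\subset U_k(T)\setminus U_{k-\ell-1}(T)$ of width $\sim\ell$ coarse layers, a fixed distance away from $U_{k-\ell-1}(T)$. Invoking that the surjective quasi-interpolation $\Ic$ possesses a \emph{local}, $H^1$-stable right-inverse — constructed from fine-scale bubble functions supported inside individual coarse elements — I would pick $z_h\in V_h$ with $\Ic(z_h)=g_H$, $\operatorname{supp}z_h$ contained in a one-layer enlargement of $C$ (still inside $\Omega\setminus U_{k-\ell-1}(T)$), and $\|\nabla z_h\|_{L^2(\Omega)}\lesssim\|\nabla g_H\|_{L^2(\Omega)}$. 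Then $\tilde w:=I_h(\eta w)-z_h$ satisfies $\Ic(\tilde w)=0$ and $\tilde w\equiv 0$ on $U_{k-\ell-1}(T)$, i.e. $\tilde w\in\mathring{W}_h(\Omega\setminus U_{k-\ell-1}(T))$.

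\emph{Estimating $\eta w-I_h(\eta w)$.} I would proceed element by element. On a fine element $\tau$ both $\eta$ and $w$ are affine, so $\eta w$ is quadratic with Hessian $\nabla\eta\,\nabla w^\top+\nabla w\,\nabla\eta^\top$; the standard local interpolation-error estimate gives $\|\nabla(\eta w-I_h(\eta w))\|_{L^2(\tau)}\lesssim h_\tau\|\nabla\eta\|_{L^\infty(\tau)}\|\nabla w\|_{L^2(\tau)}$. Because $\eta$ increases linearly from $0$ to $1$ across $\ell$ coarse layers, $\|\nabla\eta\|_{L^\infty}\lesssim(\ell H)^{-1}$, and $h_\tau\le H$ turns the bound into $\ell^{-1}\|\nabla w\|_{L^2(\tau)}$. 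The interpolation error vanishes on every $\tau$ on which $\eta$ is constant, so summation is restricted to the transition layers of $\eta$, which lie inside $U_{k+1}(T)\setminus U_{k-\ell-1}(T)$, and one obtains $\|\nabla(\eta w-I_h(\eta w))\|_{L^2(\Omega)}\lesssim\ell^{-1}\|\nabla w\|_{L^2(U_{k+1}(T)\setminus U_{k-\ell-1}(T))}$.

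\emph{Estimating $z_h$.} By stability of the right-inverse it suffices to bound $\|\nabla g_H\|$, and since $g_H\in V_H$ this follows from its nodal values. For a coarse node $z$, the relation $I_h((\eta-\eta(z))w)=I_h(\eta w)-\eta(z)w$ together with $\Ic w=0$ gives $g_H(z)=\Ic\bigl(I_h((\eta-\eta(z))w)\bigr)(z)$; using $|\eta-\eta(z)|\lesssim\ell^{-1}$ on $\omega_z$, an inverse inequality and the $L^2$ interpolation estimate yield $\|I_h((\eta-\eta(z))w)\|_{L^2(\omega_z)}\lesssim\ell^{-1}\|w\|_{L^2(\omega_z)}$, hence $|g_H(z)|\lesssim\ell^{-1}|\omega_z|^{-1/2}\|w\|_{L^2(\omega_z)}$ and $\|\nabla g_H\|_{L^2(T')}\lesssim(\ell H)^{-1}\|w\|_{L^2(\omega_{T'})}$ on each coarse element $T'$. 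Finally $\Ic w=0$ lets me apply \eqref{e:interr} in the form $\|w\|_{L^2(K)}\lesssim H\|\nabla w\|_{L^2(\omega_K)}$, which cancels the factor $H^{-1}$; a finite-overlap summation over the coarse elements meeting the collar then delivers $\|\nabla z_h\|_{L^2(\Omega)}\lesssim\ell^{-1}\|\nabla w\|_{L^2(U_{k+2}(T)\setminus U_{k-\ell-2}(T))}$ after a careful layer count, and combining this with the previous paragraph via the triangle inequality proves the lemma.

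\emph{Main obstacle.} The delicate point is the construction of $\tilde w$: it must lie \emph{simultaneously} in $\ker\Ic$ and in $H^1_0(\Omega\setminus U_{k-\ell-1}(T))$, while the obvious $\ker\Ic$-correction $z_h=(\Ic|_{V_H})^{-1}g_H\in V_H$, although $H^1$-stable, has global support and would spoil the localization. Overcoming this requires exploiting that the defect $g_H$ is supported a fixed number of coarse layers away from $U_{k-\ell-1}(T)$ and that the surjective operator $\Ic$ admits a compactly supported, $H^1$-stable right-inverse; the two extra layers appearing in $U_{k+2}(T)\setminus U_{k-\ell-2}(T)$ are precisely the combined cost of this localization, of the nodal patches $\omega_z$, and of the patch enlargement built into \eqref{e:interr}.
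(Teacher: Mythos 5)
Your proposal is correct and follows essentially the same route as the paper: replace $\eta w$ by the fine-scale nodal interpolant $I_h(\eta w)$, repair the failure of $\Ic$-orthogonality by subtracting a locally supported, $H^1$-stable preimage of $\Ic I_h(\eta w)$ (the paper invokes exactly this construction from \cite[Lemma 1]{2011arXiv1110.0692M}, equation \eqref{e:lackproj}), and bound $\|\nabla \Ic I_h(\eta w)\|$ by subtracting a local constant value of $\eta$ and using $\Ic w=0$ together with \eqref{e:interr}. Your constant is the nodal value $\eta(z)$ where the paper uses the patch average $c_K^{\ell}$, but this is an immaterial variation, and you correctly identify the compactly supported stable right-inverse of $\Ic$ as the one nontrivial ingredient.
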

\begin{proof}
Fix some $T\in\T_H$ and $k\in\mathbb{N}$ and let $\eta_\ell:=\eta_{T,k,\ell}$. The operator $I_h:H^1_0(\Omega)\cap C(\bar{\Omega})\rightarrow V^h$ denotes the nodal interpolant with respect to the mesh $\T_h$. Recall that for all quadratic polynomials $p$ and all $t\in\T_h$, $I_h$ fulfills the (local) approximation and stability estimates
\begin{equation}\label{e:intesth}
 \|\nabla (p-I_h p)\|_{L^2(t)}\lesssim h_t\|\nabla^2 p\|_{L^2(t)}\quad\text{and}\quad \|\nabla(I_h p)\|_{L^2(t)}\lesssim \| \nabla p\|_{L^2(t)}.
\end{equation} 
We will use this estimate for the $\T_h$-piecewise quadratic function $p=\eta_\ell w$. Since $\nabla^2\eta_\ell=\nabla^2 w=0$ in every $t\in\T_h$, we have that $\nabla^2_h \eta_\ell w = \nabla\eta_\ell\cdot\nabla w$ in $t$.

According to \cite[Lemma 1]{2011arXiv1110.0692M}, there exists some $v\in V^h$ such that 
\begin{equation}\label{e:lackproj}
\Ic v=\Ic I_h(\eta_\ell w),\;\|\nabla v\|_{L^2(\Omega)}\lesssim \|\nabla \Ic I_h(\eta_\ell w)\|_{L^2(\Omega)},\;\text{and }\operatorname{supp} (v)\subset \Omega \setminus U_{k-\ell-1}(T).  
\end{equation}
Hence, $\tilde{w}:=I_h(\eta_\ell w)-v\in\oVf(\Omega \setminus U_{k-\ell-1}(T))$. Since $\Ic I_h(cw)=c\Ic w=0$ for any $c\in\R$, we set $c_K^{\ell} := |\omega_K|^{-1}\int_{\omega_K} \eta_\ell$ for $K\in \T_H$ and get
\begin{eqnarray}
\nonumber\lefteqn{\|\nabla \Ic I_h(\eta_\ell w)\|_{L^2(\Omega)}^2}\\
\nonumber&\overset{\eqref{e:cutoffH}}{=}& \underset{K\subset \overline{{U}_{k+1}(T)}\setminus U_{k-\ell-1}(T)}{\sum_{K\in\T_H:}} \left\|\nabla\Ic I_h\left(\left(\eta_\ell-c_K^{\ell} \right)w\right)\right\|_{L^2(K)}^2 \\
\nonumber&\overset{\eqref{e:intesth},\eqref{e:interr}}{\lesssim}& \underset{K\subset \overline{U_{k+1}(T)}\setminus U_{k-\ell-1}(T)}{\sum_{K\in\T_H:}} \left\|\nabla\left(\left(\eta_\ell- c_K^{\ell} \right)w\right)\right\|_{L^2(\omega_K)}^2 \\
\nonumber&\overset{\eqref{e:Vf}}{\lesssim}& \underset{K\subset \overline{U_{k+1}(T)}\setminus U_{k-\ell-1}(T)}{\sum_{K\in\T_H:}}  \left\| (\nabla \eta_\ell)( w - I_H w) \right\|_{L^2(\omega_K)}^2 + \left\| \left(\eta_\ell- c_K^{\ell} \right)\nabla w \right\|_{L^2(\omega_K)}^2 \\
\nonumber&\overset{\eqref{e:cutoffH}}{\lesssim}& \underset{K\subset \overline{U_{k}(T)}\setminus U_{k-\ell}(T)}{\sum_{K\in\T_H:}} \hspace{-10pt} \left\| (\nabla \eta_\ell)( w - I_H w) \right\|_{L^2(K)}^2 \hspace{10pt}+ \hspace{-20pt} \underset{K\subset \overline{U_{k+1}(T)}\setminus U_{k-\ell-1}(T)}{\sum_{K\in\T_H:}} \hspace{-30pt} \left\| \left(\eta_\ell- c_K^{\ell} \right)\nabla w \right\|_{L^2(\omega_K)}^2 \\
\nonumber&\lesssim& \|H \nabla \eta_\ell\|^2_{L^\infty(\Omega)} \| \nabla w  \|_{L^2(U_{k+1}(T)\setminus U_{k-\ell-1}(T))}^2 + \hspace{-20pt} \underset{K\subset \overline{U_{k+1}(T)}\setminus U_{k-\ell-1}(T)}{\sum_{K\in\T_H:}}\hspace{-30pt} \left\| \left(\eta_\ell- c_K^{\ell} \right)\nabla w \right\|_{L^2(\omega_K)}^2\\
\label{e:ferlg}&{\lesssim}& \|H \nabla \eta_\ell\|^2_{L^\infty(\Omega)} \| \nabla w  \|_{L^2(U_{k+2}(T)\setminus U_{k-\ell-2}(T))}^2.
\end{eqnarray}
In the last step, we used the Lipschitz bound
\begin{align*}
 \| \eta_\ell- c_K^{\ell} \|_{L^{\infty}(\omega_K)}^2 \lesssim 
 H^2 \| \nabla \eta_\ell \|_{L^{\infty}(\omega_K)}^2.
\end{align*}
In summary we get with the previous computations:
\begin{eqnarray*}
\lefteqn{\|\nabla(\eta_\ell w-\tilde{w})\|_{L^2(\Omega)}^2\overset{\eqref{e:lackproj}}{\lesssim} \|\nabla(\eta_\ell w-I_h(\eta_\ell w))\|_{L^2(\Omega)}^2 + \|\nabla \Ic I_h(\eta_\ell w)\|_{L^2(\Omega)}^2}\\
&\overset{\eqref{e:intesth},\eqref{e:ferlg}}{\lesssim}&\|h\nabla\eta_\ell\cdot\nabla w\|^2_{L^2(\Omega)}+\|H \nabla \eta_\ell\|^2_{L^\infty(\Omega)} \| \nabla w  \|_{L^2(U_{k+2}(T)\setminus U_{k-\ell-2}(T))}^2 \\
&\overset{\eqref{e:cutoffH}}{\lesssim}& \left(\|h\nabla\eta_\ell\|^2_{L^\infty(\Omega)}+\|H \nabla \eta_\ell\|^2_{L^\infty(\Omega)}\right) \|\nabla w\|^2_{L^2(U_{k+2}(T)\setminus U_{k-\ell-2}(T))}\\
&\overset{\eqref{e:cutoffH}}{\lesssim}& \ell^{-2}\|\nabla w\|^2_{L^2(U_{k+2}(T)\setminus U_{k-\ell-2}(T))}.
\end{eqnarray*}
This proves the assertion.
\end{proof}

\begin{proof}[Proof of Lemma~\ref{l:decay}]
The proof exploits some recursive Caccioppoli argument as in \cite{2011arXiv1110.0692M}.
We fix some $T\in\T_H$ and $k\in\mathbb{N}$. Given $\ell\in\mathbb{N}$ with $\ell< k-1$, let
$\eta_\ell:=\eta_{T,k-2,\ell-4}\in V_H$ 
be some cutoff function as in \eqref{e:cutoffH}. Lemma~\ref{l:cutoff} shows that there exists some $\tilde{w}_T^i\in \Vf$ such that $\|\nabla(\eta_\ell w_T^i-\tilde{w}_T^i)\|_{L^2(\Omega)}\lesssim  \ell^{-1} \|\nabla w_T^i\|_{L^2(U_{k}(T)\setminus U_{k-\ell}(T))}$. Since $\tilde{w}_T^i \in \mathring{W}_h(\Omega \setminus U_{k-\ell+1}(T))$ and, hence, $\tilde{w}_T^i\vert_T=0$, it holds
\begin{align}
\label{lemm-4-4-step-1} \int_{\Omega \setminus U_{k-\ell}(T)} A \nabla w_T^i \cdot \nabla \tilde{w}_T^i =  \int_{\Omega} A \nabla w_T^i \cdot \nabla \tilde{w}_T^i = - \int_{T} A e_i \cdot \nabla \tilde{w}_T^i = 0.
\end{align}
The definition of $\eta_\ell$, the product rule, \eqref{e:correctorlocal}, and \eqref{e:Vf} yield
\begin{equation*}
 \begin{aligned}
  \int_{\Omega\setminus U_k(T)}A\nabla w_T^i\cdot \nabla w_T^i &\leq 
  \int_{\Omega\setminus U_{k-\ell}(T)}\eta_\ell A\nabla w_T^i\cdot \nabla w_T^i\\
  &= \int_{\Omega\setminus U_{k-\ell}(T)} A\nabla w_T^i\cdot \left(\nabla (\eta_\ell w_T^i)-w_T^i\nabla\eta_\ell\right)\\
&\overset{(\ref{lemm-4-4-step-1})}{=} \int_{\Omega\setminus U_{k-\ell}(T)} A\nabla w_T^i\cdot \left(\nabla (\eta_\ell w_T^i-\tilde{w}_T^i)-(w_T^i-\underset{=0}{\underbrace{\Ic(w_T^i)}})\nabla\eta_\ell\right)
 \end{aligned}
\end{equation*}
Observe that, by \eqref{e:cutoffH}, $\|\nabla\eta_\ell\|_{L^{\infty}(K)}=|\nabla\eta_\ell(x_K)|\lesssim \ell^{-1}H_K^{-1}$ for all $K\in\T_H$. This and the estimate \eqref{e:interr} for the interpolation error show that
\begin{equation*}
 \begin{aligned}
 \|(w_T^i-\Ic(w_T^i))\nabla\eta_\ell\|_{L^2(K)}^2&\lesssim \|\nabla\eta_\ell\|_{L^{\infty}(K)}^2\|w_T^i-\Ic(w_T^i)\|_{L^2(K)}^2\\
&\lesssim H_K^2\|\nabla\eta_\ell\|_{L^{\infty}(K)}^2\|\nabla w_T^i\|_{L^2(\omega_K)}^2\\
&\lesssim \ell^{-2}\|\nabla w_T^i \|_{L^2(\omega_K)}^2
 \end{aligned}
\end{equation*}
for any $K\in\T_H$.
The combination of the previous estimates and Cauchy-Schwarz inequalities prove that there is some constant $C_1>0$ independent of $T$, $\ell$, $k$, and the oscillations of $A$ such that 
\begin{equation}\label{e:decay}
\|A^{1/2}\nabla w_T^i \|_{L^2(\Omega\setminus U_{k}(T))}\leq C_1 \ell^{-1} \|A^{1/2}\nabla w_T^i \|_{L^2(\Omega\setminus U_{k-\ell-1}(T))}
\end{equation}
The choice
$\ell:=\left\lceil C_1 \hspace{2pt} e \right\rceil$
and the recursive application of \eqref{e:decay} readily yield the assertion. 
\end{proof}
This exponential decay justifies the approximation of the correctors on local patches $U_k(T)$ as proposed in (\ref{def-patch-U-k}). 
We denote by $\Qh^k$ the corrector that corresponds to the choice $U(T)=U_k(T)$ in Oversampling Strategy 3 and by $\Qh^\Omega$ the one for $U(T)=\Omega$.
\begin{corollary}[Truncation/Localization error]\label{c:truncation}
Let $U(T)=\Omega$ for all $T\in\T_H$ in Oversampling Strategy 3. Then, for all $T\in\T_H$ and all $k\in\mathbb{N}$,
$$\|A^{1/2}\nabla (w^i_T-w^{i,k}_T)\|_{L^2(\Omega\setminus U_k(T))}\lesssim e^{-r \cdot k}\|A^{1/2} e_i\|_{L^2(T)},$$
where $r > 0$ is as in Lemma \ref{l:decay} (independent of the variations of $A$ or the mesh size).
\end{corollary}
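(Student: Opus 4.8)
The plan is to reduce the statement, via a simple support argument, to the decay estimate of Lemma~\ref{l:decay} combined with an a priori energy bound on the ideal corrector $w_T^i$. First I would note that $w_T^{i,k}\in\oVf(U_k(T))$ vanishes identically on $\Omega\setminus U_k(T)$ by the very definition of $\oVf(U_k(T))$, so that on this set $\nabla(w_T^i-w_T^{i,k})=\nabla w_T^i$ almost everywhere; hence $\|A^{1/2}\nabla(w_T^i-w_T^{i,k})\|_{L^2(\Omega\setminus U_k(T))}=\|A^{1/2}\nabla w_T^i\|_{L^2(\Omega\setminus U_k(T))}$. Since we are in the regime $U(T)=\Omega$ underlying Lemma~\ref{l:decay}, this quantity is $\lesssim e^{-rk}\|A^{1/2}\nabla w_T^i\|_{L^2(\Omega)}$, with the same rate $r$.

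It then remains to control $\|A^{1/2}\nabla w_T^i\|_{L^2(\Omega)}$ by $\|A^{1/2}e_i\|_{L^2(T)}$. This I would obtain by testing the defining relation \eqref{e:correctorlocal} (for $U(T)=\Omega$) with $\phi_h=w_T^i\in\Vf$: the left-hand side becomes $\|A^{1/2}\nabla w_T^i\|_{L^2(\Omega)}^2$ and the right-hand side is $-\int_T A e_i\cdot\nabla w_T^i$, so a Cauchy-Schwarz inequality on $T$ and division by $\|A^{1/2}\nabla w_T^i\|_{L^2(\Omega)}$ give $\|A^{1/2}\nabla w_T^i\|_{L^2(\Omega)}\le\|A^{1/2}e_i\|_{L^2(T)}$. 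Inserting this into the bound from the previous step proves the corollary.

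The proof is therefore short, and the only point deserving some care (what I would regard as the mild main obstacle) is the bookkeeping around the truncated corrector: one should verify that $w_T^{i,k}$ is well defined, i.e.\ that $a(\cdot,\cdot)$ is coercive on the constrained subspace $\oVf(U_k(T))\subset\Vf$, which is inherited from the ellipticity of $A$; and one should observe that it is precisely the restriction of the error norm to $\Omega\setminus U_k(T)$ that eliminates any contribution of $w_T^{i,k}$, so that no genuine comparison of the two correctors inside $U_k(T)$ is needed. Everything else is a straightforward combination of Lemma~\ref{l:decay} with the energy bound just derived.
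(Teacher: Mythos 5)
Your argument does prove the inequality exactly as printed: since $w^{i,k}_T\in\oVf(U_k(T))$ vanishes on $\Omega\setminus U_k(T)$, the restricted norm collapses to $\|A^{1/2}\nabla w^i_T\|_{L^2(\Omega\setminus U_k(T))}$, and Lemma~\ref{l:decay} together with the energy bound $\|A^{1/2}\nabla w^i_T\|_{L^2(\Omega)}\le\|A^{1/2}e_i\|_{L^2(T)}$ (obtained by testing \eqref{e:correctorlocal} with $\phi_h=w^i_T$) finishes the job. However, the sentence you flag as reassuring --- that ``no genuine comparison of the two correctors inside $U_k(T)$ is needed'' --- is precisely the warning sign that you have proved the wrong statement. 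The restriction of the norm to $\Omega\setminus U_k(T)$ in the corollary is evidently a typo: the paper's own proof opens with the bound
\begin{equation*}
\|A^{1/2}\nabla (w^i_T-w^{i,k}_T)\|_{L^2(\Omega)}^2\leq \|A^{1/2}\nabla (w^i_T-\tilde{w})\|_{L^2(U_{k-1}(T))}^2+\|A^{1/2}\nabla w^i_T\|_{L^2(\Omega\setminus U_{k-1}(T))}^2,
\end{equation*}
i.e.\ it estimates the error over \emph{all} of $\Omega$, and this stronger version is what is actually consumed downstream: both \eqref{equation-influence-intersections} in Lemma~\ref{lemma-influence-intersections} and the chain of estimates in the proof of Theorem~\ref{t:H1} invoke the corollary to control $\|A^{1/2}\nabla(w^i_T-w^{i,k}_T)\|_{L^2(\Omega)}$. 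Your reduction gives no information whatsoever about the difference inside $U_k(T)$, where both correctors are nonzero, so it cannot support those later steps.

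The missing ingredients are the ones your proof deliberately avoids. Since $w^{i,k}_T$ is the Galerkin approximation of $w^i_T$ in the subspace $\oVf(U_k(T))\subset\Vf$ with respect to $a(\cdot,\cdot)$, one has the quasi-optimality $\|A^{1/2}\nabla(w^i_T-w^{i,k}_T)\|_{L^2(\Omega)}\le\|A^{1/2}\nabla(w^i_T-\tilde{w})\|_{L^2(\Omega)}$ for \emph{any} admissible comparison function $\tilde{w}\in\oVf(U_k(T))$. The paper constructs such a $\tilde{w}$ from $(1-\eta_{T,k-1,1})w^i_T$ via Lemma~\ref{l:cutoff} (checking that its support lies in $U_{k-1}(T)$ so that it is indeed admissible), which reduces the full-domain error to $\|A^{1/2}\nabla w^i_T\|_{L^2(\Omega\setminus U_{k-2}(T))}$; only then do Lemma~\ref{l:decay} and the energy bound enter, as in your last step. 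So you should keep your final two estimates but replace the support argument by the Galerkin-orthogonality-plus-cutoff construction; without it the corollary, in the form the rest of the paper needs, is not proved.
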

\begin{proof}
Galerkin orthogonality yields 
\begin{equation*}
 \|A^{1/2}\nabla (w^i_T-w^{i,k}_T)\|_{L^2(\Omega)}^2\leq \|A^{1/2}\nabla (w^i_T-\tilde{w})\|_{L^2(U_{k-1}(T))}^2+\|A^{1/2}\nabla w^i_T\|_{L^2(\Omega\setminus U_{k-1}(T))}^2, 
\end{equation*}
where $\tilde{w}\in \Vf$ is the finescale function that corresponds to $(1-\eta_{T,k-1,1})w^i_T$ and which is constructed in the same way as $\tilde{w}$ in the proof of Lemma~\ref{l:cutoff}. Here, $\eta_{T,k-1,1}$ is some cut-off function as in \eqref{e:cutoffH}. Since $\operatorname{supp}(\tilde{w})\subset\operatorname{supp}((1-\eta_{T,k-1,1})w^i_T)\subset U_{k-1}(T)$ we have that $\tilde{w}\in \oVf(U_k(T))$ and the use of Galerkin orthogonality is justified.
Proceeding as in Lemma~\ref{l:cutoff} shows that
\begin{equation*}
  \|A^{1/2}\nabla (w^i_T-w^{i,k}_T)\|_{L^2(\Omega)}^2\lesssim \|A^{1/2}\nabla w^i_T\|_{L^2(\Omega\setminus U_{k-2}(T))}^2
\end{equation*}
and the application of Lemma~\ref{l:decay} yields the assertion. 
\end{proof}
The proof of the main theorem requires one some technical result.
\begin{lemma}
\label{lemma-influence-intersections}Let $k\in \mathbb{N}_{>0}$ and let $\Phi_H \in V_H$, then
\begin{equation}
\label{equation-influence-intersections}\left\| (Q_h^\Omega-Q_h^k)\Phi_H\right\|_{L^2(\Omega)}^2\lesssim k^d\sum_{T\in\T_H}\sum_{i=1}^d|\partial_{x_i} \Phi_H(x_T)|^2\left\|A^{\frac{1}{2}}\nabla (w_T^i-w_T^{i,k})\right\|_{L^2(\Omega)}^2.
\end{equation}
\end{lemma}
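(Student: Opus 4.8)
## Proof proposal for Lemma~\ref{lemma-influence-intersections}

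The plan is to exploit the localized support structure of the truncated correctors $w_T^{i,k}$. First I would write out the difference of the two correction operators explicitly: by the definitions of $Q_h^\Omega$ and $Q_h^k$ (Strategy 3),
\begin{equation*}
(Q_h^\Omega - Q_h^k)\Phi_H = \sum_{T\in\T_H}\sum_{i=1}^d \partial_{x_i}\Phi_H(x_T)\,(w_T^i - w_T^{i,k}).
\end{equation*}
Here I use the crucial fact that $w_T^i - w_T^{i,k}$ is \emph{not} compactly supported in general ($w_T^i$ lives on all of $\Omega$), so a naive finite-overlap argument does not apply directly; instead one must split it into a part supported near $U_k(T)$ and a tail, or simply bound $\|w_T^i - w_T^{i,k}\|_{L^2(\Omega)}$ globally and then pay a factor $k^d$ for the number of overlapping patches. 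I would take the latter route since the statement already has a $k^d$ on the right-hand side.

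The key steps are as follows. First, apply the triangle inequality in $L^2(\Omega)$ to pull the sum over $T$ outside. The naive bound gives $(\sum_T |\cdot|)^2 \le |\T_H| \sum_T |\cdot|^2$, which is far too lossy; instead I would use a Cauchy--Schwarz argument weighted by the local support structure. The point is that although $w_T^i - w_T^{i,k}$ is globally supported, its $L^2$ mass is concentrated: by Corollary~\ref{c:truncation} combined with the decay Lemma~\ref{l:decay}, the bulk of $\|w_T^i - w_T^{i,k}\|$ comes from $U_k(T)$ and a geometrically decaying tail. So I would group the contributions by coarse element: for a fixed point $x\in \Omega$, the functions $w_T^i - w_T^{i,k}$ that are ``large'' at $x$ are those with $x$ within $O(k)$ layers of $T$, and there are $O(k^d)$ such $T$. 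Applying Cauchy--Schwarz over this bounded index set (of cardinality $\lesssim k^d$) gives the factor $k^d$, and then integrating over $\Omega$ and using that the tails are summable yields
\begin{equation*}
\|(Q_h^\Omega - Q_h^k)\Phi_H\|_{L^2(\Omega)}^2 \lesssim k^d \sum_{T\in\T_H}\sum_{i=1}^d |\partial_{x_i}\Phi_H(x_T)|^2 \|w_T^i - w_T^{i,k}\|_{L^2(\Omega)}^2.
\end{equation*}
Second, I would convert the $L^2$-norm of the corrector difference into the energy norm of its gradient. Since $w_T^i - w_T^{i,k} \in W_h \subset H^1_0(\Omega)$ and $\Ic(w_T^i - w_T^{i,k}) = 0$, the interpolation estimate \eqref{e:interr} summed over coarse elements gives $\|w_T^i - w_T^{i,k}\|_{L^2(\Omega)} \lesssim H \|\nabla(w_T^i - w_T^{i,k})\|_{L^2(\Omega)} \lesssim \gmin^{-1/2}\|A^{1/2}\nabla(w_T^i - w_T^{i,k})\|_{L^2(\Omega)}$ (with $H\le \operatorname{diam}\Omega$ absorbed into the generic constant), which produces the right-hand side of \eqref{equation-influence-intersections}.

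The main obstacle is the combinatorial/geometric bookkeeping in the first step: one must make precise that each point of $\Omega$ lies in the ``influence region'' of only $O(k^d)$ of the differences $w_T^i - w_T^{i,k}$ up to the geometrically small tails, and that the tails can be reabsorbed without destroying the $k^d$ scaling. Concretely, one writes $w_T^i - w_T^{i,k} = \sum_{m\ge 0}(w_T^i - w_T^{i,k})\mathbf{1}_{U_{k(m+1)}(T)\setminus U_{km}(T)}$, estimates each annular piece using Lemma~\ref{l:decay} / Corollary~\ref{c:truncation}, notes that on the $m$-th annulus only $O((k(m+1))^d)$ elements $T$ contribute at a given point, applies Cauchy--Schwarz per annulus, and sums the resulting geometric series $\sum_m (k(m+1))^d e^{-2rkm}$, which is bounded by a constant times $k^d$. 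The rest is the routine interpolation estimate. I expect the annulus-summation to be the only genuinely delicate point; everything else is a direct application of the already-established decay and interpolation results.
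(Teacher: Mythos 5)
Your outline is workable, but it takes a genuinely different route from the paper. The paper's proof is a duality-type argument: it sets $z:=(Q_h^\Omega-Q_h^k)\Phi_H$, writes the left-hand side as $a(z,z)$, expands one factor as $\sum_{T,i}\partial_{x_i}\Phi_H(x_T)\,a(w_T^i-w_T^{i,k},z)$, and splits the \emph{test function} $z$ into a near-field part $z(1-\eta_{T,k,1})$ and a far-field part $z\eta_{T,k,1}$. The far-field contribution is killed (up to a controllable correction $\tilde z$ from Lemma~\ref{l:cutoff}) by the orthogonality $a(w_T^i-w_T^{i,k},\tilde z)=0$, so only integrals over $U_{k+2}(T)$ survive; a single Cauchy--Schwarz over $T$ together with the $O(k^d)$-fold overlap of these patches produces the factor $k^d$, and one divides by $a(z,z)^{1/2}$ at the end. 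Your proof instead superposes the differences directly and controls the tails by an annulus decomposition plus exponential decay. Two caveats. First, for $m\ge 1$ the annular smallness you need is \emph{not} literally Corollary~\ref{c:truncation} (which gives one global bound with factor $e^{-rk}$); you must observe that $w_T^{i,k}$ vanishes outside $U_k(T)$, so the difference equals $w_T^i$ there, and then invoke the recursive Caccioppoli estimate behind Lemma~\ref{l:decay} to get decay proportional to $e^{-rkm}$ relative to $\|A^{1/2}\nabla(w_T^i-w_T^{i,k})\|_{L^2(\Omega)}$; with that, the sum $\sum_m (k(m+1))^{d/2}e^{-rk(m-1)}\lesssim k^{d/2}$ closes the argument. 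Second, although the lemma is stated with $\|\cdot\|_{L^2(\Omega)}^2$ on the left, the paper's own proof immediately identifies this quantity with $a(z,z)$ and the lemma is applied in Theorem~\ref{t:H1} to bound $\|A^{1/2}\nabla((Q_h^\Omega-Q_h^k)u_H)\|_{L^2(\Omega)}^2$; so the quantity actually needed is the energy norm of the gradient. Your argument, as written, targets the literal $L^2$ norm of the function (hence the final Friedrichs-type step via $\Ic(w_T^i-w_T^{i,k})=0$); fortunately the same finite-overlap-per-annulus Cauchy--Schwarz applies verbatim to $\sum_{T,i}c_{T,i}A^{1/2}\nabla(w_T^i-w_T^{i,k})$, since Lemma~\ref{l:decay} is a statement about gradients, so you should run the argument on gradients to obtain the version that is actually used. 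What your route buys is that it avoids the bilinear form, the cutoff construction of $\tilde z$, and the orthogonality argument entirely; what the paper's route buys is that it needs only the single global truncation bound of Corollary~\ref{c:truncation} rather than annulus-by-annulus decay, and it delivers the energy-norm estimate directly.
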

\begin{proof}
Let $\eta_{T,k,1}$ be defined according to \eqref{e:cutoffH} and define $z:=(Q_h^\Omega-Q_h^k)\Phi_H \in \Vf$. We decompose the error as follows:
\begin{eqnarray*}
\lefteqn{\left\| (Q_h^\Omega-Q_h^k)\Phi_H\right\|_{L^2(\Omega)}^2}\\
&= &a(z,z) \\&=& \sum_{T\in\T_H}\sum_{i=1}^d\partial_{x_i} \Phi_H(x_T) a (w_T^i-w_T^{i,k},z (1- \eta_{T,k,1})) + \sum_{T\in\T_H}\sum_{i=1}^d\partial_{x_i} \Phi_H(x_T) a (w_T^i-w_T^{i,k},z \eta_{T,k,1})\\
&=:& \mbox{I} + \mbox{II}.
\end{eqnarray*}
For the first term we get
\begin{align*}
|\mbox{I}| &\lesssim \sum_{T\in\T_H}\sum_{i=1}^d |\partial_{x_i} \Phi_H(x_T)| \| A^{\frac{1}{2}} \nabla (w_T^i-w_T^{i,k}) \|_{L^2(\Omega)} \| \nabla \left( z (1- \eta_{T,k,1}) \right) \|_{L^2(U_{k+1}(T))},
\end{align*}
where with $I_H(z)=0$
\begin{align*}
\| \nabla \left( z (1- \eta_{T,k,1}) \right) \|_{L^2(U_{k+1}(T))} &\le \| \nabla z \|_{L^2(U_{k+1}(T))} + \| z \nabla \left( 1- \eta_{T,k,1} \right) \|_{L^2(U_{k+1}(T) \setminus U_{k}(T) )}\\
&\lesssim \| \nabla z \|_{L^2(U_{k+1}(T))} + \frac{1}{H} \| z - I_H(z) \|_{L^2(U_{k+1}(T) \setminus U_{k}(T) )}\\
&\lesssim  \| \nabla z \|_{L^2(U_{k+2}(T))} 
\end{align*}
and therefore
\begin{align*}
|\mbox{I}| &\lesssim \sum_{T\in\T_H}\sum_{i=1}^d|\partial_{x_i} \Phi_H(x_T)| \| A^{\frac{1}{2}} \nabla (w_T^i-w_T^{i,k}) \|_{L^2(\Omega)} \| \nabla z \|_{L^2(U_{k+2}(T))} \\
&\lesssim k^{\frac{d}{2}} \left( \sum_{T\in\T_H}\sum_{i=1}^d|\partial_{x_i} \Phi_H(x_T)|^2 \| A^{\frac{1}{2}} \nabla (w_T^i-w_T^{i,k}) \|_{L^2(\Omega)}^2 \right)^{\frac{1}{2}} \| \nabla z \|_{H^1(\Omega)}. 
\end{align*}
To estimate the second term, we use Lemma \ref{l:cutoff} which gives us the existence of some $\tilde{z}\in \mathring{W}_h(\Omega \setminus U_{k-2}(T))$ with $a(w_T^i-w_T^{i,k},\tilde{z})=0$ (as in \eqref{lemm-4-4-step-1}) and $\| \nabla (z \eta_{T,k,1} - \tilde{z}) \|_{L^2(\Omega)} \lesssim \| \nabla z \|_{L^2( U_{k+2}(T) )}$. This yields 
\begin{align*}
|\mbox{II}| &= \left|
\sum_{T\in\T_H}\sum_{i=1}^d\partial_{x_i} \Phi_H(x_T) a (w_T^i-w_T^{i,k}, z \eta_{T,k,1} - \tilde{z})
\right|\\
&\le \sum_{T\in\T_H}\sum_{i=1}^d |\partial_{x_i} \Phi_H(x_T)|  \| A^{\frac{1}{2}} \nabla (w_T^i-w_T^{i,k}) \|_{L^2(\Omega)} \| \nabla z \|_{L^2( U_{k+2}(T) )}\\
&\lesssim k^{\frac{d}{2}} \left( \sum_{T\in\T_H}\sum_{i=1}^d|\partial_{x_i} \Phi_H(x_T)|^2 \| A^{\frac{1}{2}} \nabla (w_T^i-w_T^{i,k}) \|_{L^2(\Omega)}^2 \right)^{\frac{1}{2}} \| \nabla z \|_{H^1(\Omega)}. 
\end{align*}
Combining the estimates for I and II and dividing by $ \| \nabla z \|_{H^1(\Omega)} \lesssim a(z,z)^{\frac{1}{2}}$ yields the assertion.
\end{proof}

\begin{theorem}[$H^1$ error estimate]\label{t:H1}
Given $k\in\mathbb{N}$, let $U(T)=U_k(T)$ for all $\T\in\T_H$ in Oversampling Strategy 3. Then the multiscale approximation $u_H^k$ that solves \eqref{e:MsVMMsym} satisfies the error estimate
\begin{equation*}
 \|\nabla u_h-\nabla(u_H^k+\Qh^k(u_H^k))\|\lesssim \gmin^{-1}\|H f\|_{L^2(\Omega)} + e^{-r  k}\|f\|_{H^{-1}(\Omega)},
\end{equation*}
where $u_h$ is the reference solution from \eqref{e:modelref} and $r>0$ as in Lemma \ref{l:decay}.
\end{theorem}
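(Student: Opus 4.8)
}
The plan is to observe that $u_H^k+\Qh^k(u_H^k)$ is nothing but the $a(\cdot,\cdot)$-Galerkin approximation of the reference solution $u_h$ in the augmented space $\tilde V_H^k:=\{\Phi_H+\Qh^k(\Phi_H)\;|\;\Phi_H\in V_H\}\subset V_h$, and to compare it against the untruncated maximal-oversampling approximation $u^{\mbox{\tiny ms}}:=u_H^\Omega+\Qh^\Omega(u_H^\Omega)$, whose error is already under control by Lemma~\ref{l:maximalos}. First I would note that, since $\Qh^k$ maps into $\Vf=\ker\Ic$ and $V_H\cap\Vf=\{0\}$, the map $\Phi_H\mapsto\Phi_H+\Qh^k(\Phi_H)$ is injective, so $\dim\tilde V_H^k=\dim V_H$ and \eqref{e:MsVMMsym} is uniquely solvable by coercivity of $\mathcal{A}^{\mathbf 3}$. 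Testing \eqref{e:MsVMMsym} against an arbitrary $\Phi_H\in V_H$ and using $F(v)=a(u_h,v)$ for every $v\in V_h$ (which holds by \eqref{e:modelref}), one gets the Galerkin orthogonality $a(u_h-u_H^k-\Qh^k(u_H^k),v)=0$ for all $v\in\tilde V_H^k$. As $a$ is symmetric and positive definite, $u_H^k+\Qh^k(u_H^k)$ is therefore the $a$-orthogonal projection of $u_h$ onto $\tilde V_H^k$, so that
\[
  \|A^{1/2}\nabla(u_h-u_H^k-\Qh^k(u_H^k))\|_{L^2(\Omega)}=\min_{v\in\tilde V_H^k}\|A^{1/2}\nabla(u_h-v)\|_{L^2(\Omega)}.
\]

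As competitor I would take $v:=u_H^\Omega+\Qh^k(u_H^\Omega)\in\tilde V_H^k$, i.e. the \emph{same} coarse part as the maximal-oversampling solution but with the truncated corrector. Then $u_h-v=(u_h-u^{\mbox{\tiny ms}})+(\Qh^\Omega-\Qh^k)(u_H^\Omega)$, and the triangle inequality splits the error into the maximal-oversampling contribution, bounded by $\gmin^{-1}\|Hf\|_{L^2(\Omega)}$ via Lemma~\ref{l:maximalos}, plus the corrector-truncation contribution $\|A^{1/2}\nabla(\Qh^\Omega-\Qh^k)(u_H^\Omega)\|_{L^2(\Omega)}$. For the latter I would re-run the cut-off argument from the proof of Lemma~\ref{lemma-influence-intersections}, but stopping one step before the concluding Poincar\'e estimate so as to keep the energy norm on the left: that proof in fact establishes $\|A^{1/2}\nabla(\Qh^\Omega-\Qh^k)\Phi_H\|_{L^2(\Omega)}^2\lesssim k^d\sum_{T\in\T_H}\sum_{i=1}^d|\partial_{x_i}\Phi_H(x_T)|^2\|A^{1/2}\nabla(w_T^i-w_T^{i,k})\|_{L^2(\Omega)}^2$ along the way. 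Inserting Corollary~\ref{c:truncation} --- whose proof actually delivers the bound $\|A^{1/2}\nabla(w_T^i-w_T^{i,k})\|_{L^2(\Omega)}\lesssim e^{-rk}\|A^{1/2}e_i\|_{L^2(T)}\lesssim e^{-rk}\gmax^{1/2}|T|^{1/2}$ on all of $\Omega$, not just on $\Omega\setminus U_k(T)$ --- and using that $\nabla\Phi_H$ is $\T_H$-piecewise constant, so $\sum_{T,i}|\partial_{x_i}\Phi_H(x_T)|^2|T|=\|\nabla\Phi_H\|_{L^2(\Omega)}^2$, gives $\|A^{1/2}\nabla(\Qh^\Omega-\Qh^k)(u_H^\Omega)\|_{L^2(\Omega)}\lesssim k^{d/2}e^{-rk}\gmax^{1/2}\|\nabla u_H^\Omega\|_{L^2(\Omega)}$.

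It remains to bound the coarse part by the data. Testing \eqref{e:MsVMMsym} for $U(T)=\Omega$ with $\Phi_H=u_H^\Omega$ yields $a(u^{\mbox{\tiny ms}},u^{\mbox{\tiny ms}})=F(u^{\mbox{\tiny ms}})\le\|f\|_{H^{-1}(\Omega)}\|\nabla u^{\mbox{\tiny ms}}\|_{L^2(\Omega)}$, hence $\|\nabla u^{\mbox{\tiny ms}}\|_{L^2(\Omega)}\le\gmin^{-1}\|f\|_{H^{-1}(\Omega)}$; and since $\Ic(u_H^\Omega)=\Ic(u^{\mbox{\tiny ms}})$ (because $\Qh^\Omega(u_H^\Omega)\in\Vf=\ker\Ic$) and both $\Ic$ and $\Ic^{-1}|_{V_H}$ are $H^1$-stable with shape-regularity-dependent constants --- the very property that makes the scheme well posed --- we get $\|\nabla u_H^\Omega\|_{L^2(\Omega)}\lesssim\|\nabla\Ic(u_H^\Omega)\|_{L^2(\Omega)}=\|\nabla\Ic(u^{\mbox{\tiny ms}})\|_{L^2(\Omega)}\lesssim\gmin^{-1}\|f\|_{H^{-1}(\Omega)}$. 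Collecting the three pieces and dividing by $\gmin^{1/2}$ to pass back to the unweighted gradient norm produces $\|\nabla(u_h-u_H^k-\Qh^k(u_H^k))\|_{L^2(\Omega)}\lesssim\gmin^{-1}\|Hf\|_{L^2(\Omega)}+k^{d/2}e^{-rk}\|f\|_{H^{-1}(\Omega)}$, and the polynomial factor $k^{d/2}$ is absorbed into the exponential at the price of an arbitrarily small reduction of the decay rate $r$.

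I expect the delicate points to be two. First, the summation over the $T$-indexed corrector differences $w_T^i-w_T^{i,k}$, whose supports overlap strongly once oversampling is switched on: this is exactly what the cut-off/Caccioppoli machinery of Lemmas~\ref{l:cutoff}, \ref{l:decay} and \ref{lemma-influence-intersections} was built for, so the actual work here is only to notice that that machinery already returns the energy-norm estimate we need (and that Corollary~\ref{c:truncation} is used in its sharper, global form). Second, everything hinges on the \emph{exponential} decay $e^{-rk}$ of Lemma~\ref{l:decay}; it is available precisely because the local correctors of Strategy~3 are sought in the constrained space $\Vf$ rather than in $V_h$ restricted to the patch --- the latter would only yield the algebraic $O(\dmin^{-1})$ decay of Strategies~1 and~2, which would not close the estimate.
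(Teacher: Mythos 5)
Your proposal is correct and follows essentially the same route as the paper: best approximation in the energy norm with the competitor $u_H^\Omega+Q_h^k(u_H^\Omega)$, Lemma~\ref{l:maximalos} for the ideal part, the energy-norm form of Lemma~\ref{lemma-influence-intersections} together with the global form of Corollary~\ref{c:truncation} for the truncation part, and the stability of $\Ic$ (and its inverse on $V_H$) to bound $\|\nabla u_H^\Omega\|_{L^2(\Omega)}$ by $\|f\|_{H^{-1}(\Omega)}$. You even correctly flagged, and repaired, the two places where the stated lemmas must be read in their stronger forms implicit in their proofs, so nothing further is needed.
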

\begin{remark}[Relation to the results \cite{2011arXiv1110.0692M}]
In the case of maximal oversampling, the new MsFEM with constrained oversampling coincides with the ideal version (without localization) of the variational multiscale method presented in \cite{2011arXiv1110.0692M}. The localized versions are different and allow similar, but not identical error estimates. The upper bound obtained in \cite{2011arXiv1110.0692M}  reads (up to some  multiplicative constant) 
$$\|H f\|_{L^2(\Omega)} + H^{-1}e^{-r  k}\|f\|_{H^{-1}(\Omega)}.$$
Our new localization strategy allows for an improved estimate in the sense that the unpleasant factor $H^{-1}$ does not appear. Note that the proof of the error estimate in Theorem \ref{t:H1} does not generalize to the localization strategy used in \cite{2011arXiv1110.0692M} and must therefore be seen independently. The reason is that the structure of the local problems (\ref{e:correctorlocal}) gives us a nice summation property which we were able to exploit, but which is not available in  \cite{2011arXiv1110.0692M}.
This observation indicates that better numerical approximations for equal sizes of oversampling patches are possible with our new approach. This will be investigated in future works. 
\end{remark}

\begin{proof}[Proof of Theorem~\ref{t:H1}]
Using the fact that Galerkin approximation minimize the error in the energy norm, we obtain with the definitions of $u_H^k$ and $u_h$ that for all $\Phi_H \in V_H$
\begin{align}
\label{first-step-proof-thm-4-12} \|&A^{1/2}(\nabla u_h-\nabla(u_H^k+\Qh^k(u_H^k)))\|_{L^2(\Omega)}\leq \|A^{1/2}(\nabla u_h-\nabla(\Phi_H+\Qh^k(\Phi_H)))\|_{L^2(\Omega)}.
\end{align}
Let $u_H$ be the solution of \eqref{e:MsVMMsym} with the ideal corrector $\Qh=\Qh^\Omega$. Then
\begin{eqnarray*}
\lefteqn{\|A^{1/2}(\nabla u_h-\nabla(u_H^k+\Qh^k(u_H^k)))\|_{L^2(\Omega)}}\\
&\overset{(\ref{first-step-proof-thm-4-12})}{\leq}& \|A^{1/2}(\nabla u_h-\nabla(u_H+\Qh^k(u_H)))\|_{L^2(\Omega)}\\
&\leq& \|A^{1/2}(\nabla u_h-\nabla(u_H+\Qh^\Omega(u_H)))\|_{L^2(\Omega)}\\
&\enspace&\qquad+\|A^{1/2}(\nabla(u_H+\Qh^\Omega(u_H))-\nabla(u_H+\Qh^k(u_H)))\|_{L^2(\Omega)}\\
&\lesssim& \gmin^{-1/2}\|H f\|_{L^2(\Omega)} +\|A^{1/2}\nabla((\Qh^\Omega-\Qh^k)(u_H))\|_{L^2(\Omega)}.
\end{eqnarray*}
By Corollary~\ref{c:truncation}, we get
\begin{eqnarray*}
\lefteqn{\|A^{1/2}\nabla((\Qh^\Omega-\Qh^k)(u_H))\|_{L^2(\Omega)}^2}\\
&=&\left\| \sum_{T\in\T_H}\sum_{i=1}^d\partial_{x_i} u_H(x_T)A^{\frac{1}{2}}\nabla (w_T^i-w_T^{i,k})\right\|_{L^2(\Omega)}^2\\
&\overset{\eqref{equation-influence-intersections}}{\lesssim}&
k^d\sum_{T\in\T_H}\sum_{i=1}^d|\partial_{x_i} u_H(x_T)|^2\left\|A^{\frac{1}{2}}\nabla (w_T^i-w_T^{i,k})\right\|_{L^2(\Omega)}^2\\
&\lesssim& k^d 
e^{-2r \cdot k}\sum_{T\in\T_H}\sum_{i=1}^d|\partial_{x_i} u_H(x_T)|^2\|A^{1/2}e_i\|_{L^2(T)}^2\\
&\lesssim& k^d 
e^{-2r \cdot k}\sum_{T\in\T_H}\sum_{i=1}^d\|A^{1/2}\nabla u_H\|_{L^2(T)}^2\\
&\lesssim& k^d 
e^{-2r k} \|f\|_{H^{-1}(\Omega)}^2.
\end{eqnarray*}
In the last step we have used that $u_H=\Ic(u_H+\Qh^\Omega(u_H))$, the stability of $\Ic$ and the energy estimate $\|A^{1/2}\nabla (u_H+\Qh^\Omega(u_H)\|_{L^2(T)}\lesssim \gmin^{-1/2}\|f\|_{H^{-1}(\Omega)}$.
\end{proof}

\begin{theorem}[$L^2$-estimates]\label{t:L2}
Given $k\in\mathbb{N}$, let $U(T)=U_k(T)$ for all $\T\in\T_H$ in Oversampling Strategy 3. Then the multiscale approximation $u_H^k$ that solves \eqref{e:MsVMMsym} satisfies the error estimates
\begin{equation*}
 \|u_h-(u_H^k+\Qh^k(u_H^k))\|_{L^2(\Omega)}\lesssim (\gmin^{-1}\|H\|_{L^{\infty}(\Omega)}+k^{d/2}
e^{-r k})^2\|f\|_{L^2(\Omega)}
\end{equation*}
and
\begin{equation*}
\|u_h-u_H^k\|_{L^2(\Omega)}
\lesssim \min_{v_H\in V_H}\|u_h-v_H\|_{L^2(\Omega)}+(\gmin^{-1}\|H\|_{L^{\infty}(\Omega)}+k^{d/2}
e^{-r k})^2\|f\|_{L^2(\Omega)},
\end{equation*}
where $u_h$ is the reference solution from \eqref{e:modelref} and $r$ is a positive constant.
\end{theorem}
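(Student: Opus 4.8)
The plan is to obtain the first $L^2$-estimate by an Aubin--Nitsche duality argument on top of the $H^1$-estimate of Theorem~\ref{t:H1}, and then to deduce the estimate for the coarse part $u_H^k$ from it, exploiting that the correction operator maps into $\Vf=\ker\Ic$. Throughout, abbreviate $v^k:=u_H^k+\Qh^k(u_H^k)$ and $e_h:=u_h-v^k$, and set $\tilde V_H^k:=\{\Phi_H+\Qh^k(\Phi_H)\mid\Phi_H\in V_H\}$. Since each $w_{h,T,i}^{\mathcal{U},\mathbf{3}}\in\oVf(U_k(T))\subset V_h$ we have $\tilde V_H^k\subset V_h\subset H^1_0(\Omega)$, so in particular $e_h\in V_h$; moreover $\mathcal{A}^{\mathbf{3}}$ is coercive (as noted for Strategy~3), so the dual approximations used below are well defined.

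First I would record Galerkin orthogonality: because $\tilde V_H^k\subset V_h$ is conforming, every $\Phi_H+\Qh^k(\Phi_H)$ is an admissible test function in \eqref{e:modelref}; comparing \eqref{e:modelref} with \eqref{e:MsVMMsym}, whose left-hand side equals $a(v^k,\Phi_H+\Qh^k(\Phi_H))$, gives $a(e_h,w)=0$ for all $w\in\tilde V_H^k$. Since $A$ is symmetric, $a(\cdot,\cdot)$ is a symmetric scalar product. Let $z_h\in V_h$ solve the discrete dual problem $a(z_h,v)=(e_h,v)_{L^2(\Omega)}$ for all $v\in V_h$, and let $z^k:=z_H^k+\Qh^k(z_H^k)$ be the Strategy~3 approximation of that dual problem, i.e.\ the solution of \eqref{e:MsVMMsym} with $e_h$ in place of $f$. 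Testing with $v=e_h$, subtracting $a(e_h,z^k)=0$, and applying Cauchy--Schwarz in the $A$-weighted inner product yields
\[
\|e_h\|_{L^2(\Omega)}^2=a(e_h,z_h-z^k)\le\|A^{1/2}\nabla e_h\|_{L^2(\Omega)}\,\|A^{1/2}\nabla(z_h-z^k)\|_{L^2(\Omega)}.
\]
Now I would apply Theorem~\ref{t:H1} twice: once with right-hand side $e_h$ to bound the second factor, once with the original $f$ to bound the first. The former gives $\lesssim\gmin^{-1}\|He_h\|_{L^2(\Omega)}+k^{d/2}e^{-rk}\|e_h\|_{H^{-1}(\Omega)}$, which by $\|He_h\|_{L^2(\Omega)}\le\|H\|_{L^\infty(\Omega)}\|e_h\|_{L^2(\Omega)}$ and a Poincar\'e bound $\|e_h\|_{H^{-1}(\Omega)}\lesssim\|e_h\|_{L^2(\Omega)}$ is $\lesssim(\gmin^{-1}\|H\|_{L^\infty(\Omega)}+k^{d/2}e^{-rk})\|e_h\|_{L^2(\Omega)}$. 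Treating the first factor the same way (with $\|Hf\|_{L^2(\Omega)}\le\|H\|_{L^\infty(\Omega)}\|f\|_{L^2(\Omega)}$ and $\|f\|_{H^{-1}(\Omega)}\lesssim\|f\|_{L^2(\Omega)}$), substituting both bounds and dividing by $\|e_h\|_{L^2(\Omega)}$ produces the first estimate.

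For the estimate on the coarse part I would use that $\Qh^k$ maps $V_H$ into $\Vf=\ker\Ic$ (since $w_{h,T,i}^{\mathcal{U},\mathbf{3}}\in\oVf(U_k(T))\subset\Vf$ and $\Vf$ is linear), hence $\Ic v^k=\Ic u_H^k=u_H^k$ because $\Ic$ is the identity on $V_H$. Then the triangle inequality and the $L^2$-stability of $\Ic$ give
\[
\|u_h-u_H^k\|_{L^2(\Omega)}=\|u_h-\Ic v^k\|_{L^2(\Omega)}\le\|u_h-\Ic u_h\|_{L^2(\Omega)}+\|\Ic(u_h-v^k)\|_{L^2(\Omega)}\lesssim\|u_h-\Ic u_h\|_{L^2(\Omega)}+\|u_h-v^k\|_{L^2(\Omega)};
\]
the last term is the first estimate, and for the first term, writing $u_h-\Ic u_h=(u_h-\phi_H)-\Ic(u_h-\phi_H)$ for arbitrary $\phi_H\in V_H$ and again using $L^2$-stability of $\Ic$ bounds it by $\min_{\phi_H\in V_H}\|u_h-\phi_H\|_{L^2(\Omega)}$, which completes the second estimate.

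The main obstacle I anticipate lies in the duality step: one must verify that the Galerkin orthogonality of $e_h$ holds against the \emph{localized} multiscale space $\tilde V_H^k$ (not the ideal one), so that Theorem~\ref{t:H1} can be re-invoked verbatim for the dual problem, and one has to keep careful track of the powers of $\gmin$ and of the interplay between the $A^{1/2}$-weighted and the plain $H^1$-seminorm, so that the constants from the two applications of Theorem~\ref{t:H1} combine into exactly the squared factor $(\gmin^{-1}\|H\|_{L^\infty(\Omega)}+k^{d/2}e^{-rk})^2$ of the statement. The remaining manipulations are routine.
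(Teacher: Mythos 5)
Your proposal is correct and follows exactly the route the paper itself takes (its proof is only a two-line sketch: an Aubin--Nitsche duality argument for the first bound, and quasi-optimality plus $L^2$-stability of $\Ic$ for the second), with the localized Galerkin orthogonality and the two invocations of Theorem~\ref{t:H1} filled in as one would expect. One small caveat: the Cl\'ement-type operator \eqref{e:clement} is only \emph{invertible} on $V_H$, not the identity there, so strictly the coarse part should be extracted via the $L^2$-stable projection $(\Ic|_{V_H})^{-1}\circ\Ic$ rather than $\Ic$ itself --- an identification the paper also makes implicitly in the proof of Theorem~\ref{t:H1} --- but this does not change the argument.
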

\begin{proof}
A standard Aubin-Nitsche duality argument yields the first estimate.
The second estimate follows from the first one and the quasi-optimality and stability of the interpolation $\Ic$ in $L^2(\Omega)$.
\end{proof}

\begin{remark}[Smooth coefficient with known smallest scale $\varepsilon$.]
Let $\Omega$ be convex, $f\in L^2(\Omega)$ with $\|f\|_{L^2(\Omega)}\lesssim1$, $A\in W^{1,\infty}(\Omega)$ with $\|\nabla A\|_{L^{\infty}(\Omega)}\lesssim \varepsilon^{-1}$ with some small scale parameter $\varepsilon>0$. Choose uniform meshes $\T_H$ and $\T_h$ with $H\gtrsim \varepsilon\gtrsim h$. 
Under these assumptions, the error of the reference solution $u_h\in V_h$ is bounded as follows,
$$\|\nabla(u-u_h)\|\lesssim h \varepsilon^{-1}.$$
We refer to see \cite{PS12} for details. 
If $k\gtrsim\log(H^{-1})$, Theorem~\ref{t:H1} and Theorem~\ref{t:L2} yield the error bounds
\begin{eqnarray*}
\|\nabla(u-u_H^k-\Qh^k(u_H^k))\|_{L^2(\Omega)}&\lesssim& H+\tfrac{h}{\varepsilon},\\
 \|u-u_H^k-\Qh^k(u_H^k)\|_{L^2(\Omega)}&\lesssim& H^2+\left(\tfrac{h}{\varepsilon}\right)^2,\\
 \|u-u_H^k\|_{L^2(\Omega)}&\lesssim& H+\left(\tfrac{h}{\varepsilon}\right)^2.
\end{eqnarray*}
\end{remark}

\section{Numerical experiments}
\label{section-numerical-experiments}

\begin{table}[t]
\caption{Computations made for $h=2^{-6}$. $k$ denotes the number of coarse layers. $u_h$ denotes the fine scale reference given by  (\ref{e:modelref}) and $u^{\mathbf{3},\mbox{\rm \tiny MsFEM}}_H$ denotes the MsFEM approximation obtained with Oversampling Strategy 3. The table depicts various errors between $u_h$ and $u^{\mathbf{3},\mbox{\rm \tiny MsFEM}}_H$.}
\label{serie-convergence}
\begin{center}
\begin{tabular}{|c|c|c|c|c|}
\hline $H$      &  \mbox{Fine layers} &
          \mbox{k} & $\|u_h - u^{\mathbf{3},\mbox{\rm \tiny MsFEM}}_H\|_{L^2(\Omega)}$ & $\|u_h - u^{\mathbf{3},\mbox{\rm \tiny MsFEM}}_H\|_{H^1(\Omega)}$ \\
\hline $2^{-1}$ & 16 & 0.5      & 0.490063 & 4.49575 \\
\hline
\hline $2^{-2}$ & 8   & 0.5    & 0.09491 & 1.66315 \\
\hline $2^{-2}$ & 24 & 1.5    & 0.06376 & 1.08960 \\
\hline
\hline $2^{-3}$ & 4    & 0.5 &  0.033691 & 1.017150 \\
\hline $2^{-3}$ & 8    & 1    &  0.007125 & 0.406317 \\
\hline $2^{-3}$ & 12  & 1.5 &  0.007115 & 0.331458 \\
\hline $2^{-3}$ & 16  & 2     & 0.003241 & 0.165703  \\
\hline
\hline $2^{-4}$ &  2    & 0.5 & 0.012808 & 0.655269 \\
\hline $2^{-4}$ &  4    & 1    & 0.004164 & 0.348814 \\
\hline $2^{-4}$ &  6    & 1.5 & 0.004029 & 0.329306 \\
\hline $2^{-4}$ &  8    & 2    & 0.001451 & 0.162747 \\
\hline $2^{-4}$ &  12  & 2.5 & 0.000850 & 0.114040 \\
\hline $2^{-4}$ &  16  & 3    & 0.000696 & 0.096378 \\
\hline
\end{tabular}
\end{center}
\end{table}

\begin{table}[h]
\caption{Computations made for $h=2^{-6}$. $k$ denotes the number of coarse layers. $u_h$ denotes the fine scale reference given by  (\ref{e:modelref}) and $u^{\mathbf{i},\mbox{\rm \tiny MsFEM}}_H$ denotes the MsFEM approximation obtained with Oversampling Strategy $i$. The error is denoted by $e_{i}:=u_h - u^{\mathbf{i},\mbox{\rm \tiny MsFEM}}_H$. The second column depicts the number of fine grid layers.}
\label{serie-comparison-strategies}
\begin{center}
\begin{tabular}{|c|c|c|c|c|c|c|c|c|c|}
\hline \multicolumn{2}{|c|}{$\enspace$} & \multicolumn{2}{|c|}{Strategy 1} & \multicolumn{2}{|c|}{Strategy 2} &\multicolumn{2}{|c|}{Strategy 3}\\
\hline $H$      &  $k$ & $\|e_1\|_{L^2}$ & $\|e_1\|_{H^1}$
& $\|e_2\|_{L^2}$ & $\|e_2\|_{H^1}$
& $\|e_3\|_{L^2}$ & $\|e_3\|_{H^1}$ \\
\hline
\hline $2^{-2}$ & 1 &
   0.1399      & 1.9812  &
   0.1399      & 1.9812  &
   0.0638      & 1.0896  \\
\hline
\hline $2^{-3}$ &  1   &
   0.0594      & 1.6250  &
   0.0594      & 1.6250  &
   0.0071      & 0.4063  \\
\hline $2^{-3}$ &  2 &
   0.0593           & 1.6250  &
   0.0593           & 1.6250  &
   0.0032           & 0.1657  \\
\hline
\hline $2^{-4}$ &  1   &
   0.0166      & 0.8067  &
   0.0172      & 0.8048  &
   0.0042      & 0.3488 \\
\hline $2^{-4}$ &  2   &
   0.0160           & 0.8057  &
   0.0168           & 0.7955  &
   0.0015           & 0.1628 \\
\hline $2^{-4}$  &  3 &
   0.0153                             & 0.8016  &
   0.0152                             & 0.7937  &
   0.0007                             & 0.0964  \\
\hline
\end{tabular}
\end{center}
\end{table}

In this section we present numerical experiments to confirm the derived error estimates and to compare the numerical accuracies of the oversampling strategies 1, 2 and 3. Here we use strategy 1 and 2 in Petrov-Galerkin formulation (due to the findings in \cite{Hou:Wu:Zhang:2004}) and strategy 3 in symmetric formulation. We consider the following model problem.
\begin{modelproblem}
Let $\Omega := ]0,1[^2$ and $\epsilon=5 \cdot 10^{-2}$. We define
\begin{align*}
 u(x_1,x_2):= \mbox{\rm sin}( 2 \pi x_1 ) \mbox{\rm sin}( 2 \pi x_2 ) + \frac{\varepsilon}{2} \mbox{\rm cos}( 2 \pi x_1 ) \mbox{\rm sin}( 2 \pi x_2 ) \mbox{\rm sin}( 2 \pi \frac{x_1}{\varepsilon} ),
\end{align*}
which is the exact solution of the problem
\begin{align*}
- \nabla \cdot \left( A \nabla u \right) &= f \quad \mbox{in} \enspace \Omega \\
 u &= 0 \quad \mbox{on} \enspace \partial \Omega,
\end{align*}
where $A$ is given by
\begin{eqnarray*}
A(x_1,x_2):= \frac{1}{8 \pi^2} \left(\begin{matrix}
                         2(2 + \mbox{\rm cos}( 2 \pi \frac{x_1}{\varepsilon} ))^{-1}  & 0 \\
                         0 & 1 + \frac{1}{2}\mbox{\rm cos}( 2 \pi \frac{x_1}{\varepsilon} )
                        \end{matrix}\right)
\end{eqnarray*}
and $f$ by
\begin{align*}
 f(x):= - \nabla \cdot \left( A(x) \nabla u(x) \right) \approx \mbox{\rm sin}( 2 \pi x_1 ) \mbox{\rm sin}( 2 \pi x_2 ).
\end{align*}
\end{modelproblem}
In Table \ref{serie-convergence} we depict the results for $h=2^{-6}$ and various combinations of $H$ with different numbers of oversampling layers. For a better illustration we state the number of fine grid layers and the number of coarse grid layers ($k$) that corresponds with that. The results in Table \ref{serie-convergence} match nicely with the analytically predicted behavior. In Table \ref{serie-comparison-strategies} we state a comparison between the $L^2$- and $H^1$-errors for the three oversampling strategies obtained for identical values of $H$, $h$ and $\mathcal{U}$. We observe that our oversampling strategy, in contrast to the classical ones, does not suffer from a loss in accuracy when $H$ is close to the microscopic parameter $\varepsilon$. Moreover, the  accuracy obtained for strategy 3 is very promising in general.

\section{Conclusion}\label{section-conclusion}

In this work, we proposed a new oversampling strategy for the Multiscale Finite Element Method (MsFEM) which generalizes the original method without oversampling. The new strategy is based on an additional constrained for the solution spaces of the local problems. The error analysis shows that oversampling layers of thickness $H \log(H^{-1})$ suffice to preserve the common convergence rates with respect to $H$ without any preassymptotic effects. Moreover, this choice prevents resonance errors even for general $L^\infty$ coefficients without any assumptions on the geometry of the microstructure or the regularity of $A$. In this respect, the method is reliable. The method is also efficient in the sense that structural knowledge about the coefficient, e.g. (local) periodicity or scale separation, may be exploited to reduce the number of corrector problems considerably. Whether the oversampling can be reduced to very small layers in the case of e.g. periodicity, should be investigated numerically and/or analytically in future works.

$\\$
{\bf{Acknowledgements.}} The authors gratefully acknowledge the helpful suggestions made by the anonymous referees, which greatly improved the presentation of the paper. Further more we thank Antoine Gloria for pointing out useful references and Xiao-Hui Wu for the nice discussions at the SIAM Geoscience Conference 2013 and his helpful remarks.


\appendix

\bibliographystyle{alpha}

\end{document}